\documentclass[11pt]{amsart}
\usepackage[left=2cm,top=2cm,right=3cm,nofoot]{geometry}
\usepackage{amsmath,mathtools}%
\usepackage{amsfonts}%
\usepackage{amssymb}%
\usepackage{graphicx}
\usepackage{esint}
\usepackage{hyperref}
\usepackage{enumitem}
\usepackage{accents}
\usepackage{etoolbox}
\patchcmd{\subsection}{-.5em}{.5em}{}{}
\newtheorem{theorem}{Theorem}[section]
\theoremstyle{plain}

\newtheorem{corollary}[theorem]{Corollary}

\newtheorem{definition}[theorem]{Definition}

\newtheorem{lemma}[theorem]{Lemma}

\newtheorem{remark}[theorem]{Remark}

\numberwithin{equation}{section}
\theoremstyle{plain}

\usepackage{etoolbox}
\AtEndEnvironment{proof}{\setcounter{claim}{0}}
\newcommand{\re}{\mathbb{R}}

\begin{document}

\title[Allen-Cahn]{Interface foliation near a minimal isoparametric hypersurface for the Allen-Cahn equation}

\author[G. Henry]{Guillermo Henry}
\address{Departamento de Matem\'atica, FCEyN, Universidad de BuenosAires and IMAS,   CONICET-UBA, Ciudad Universitaria, Pab. I., C1428EHA, Buenos Aires, Argentina and CONICET, Argentina.}
\email{ghenry@dm.uba.ar}

\author[J. Petean]{Jimmy Petean}\thanks{J. Petean is supported by [SECIHTI proyecto CBF-2025-I-1449].}
\address{Centro de Investigaci\'{o}n en Matem\'{a}ticas, CIMAT, Calle Jalisco s/n, 36023 Guanajuato, Guanajuato, M\'{e}xico}
\email{jimmy@cimat.mx}

\begin{abstract} Let $(M,g)$ be a closed Riemannian manifold of positive Ricci curvature. Let $f$ be a proper isoparametric 
function on $M$, and $\Gamma$ the unique level set of $f$ which is a minimal hypersurface.
For any positive integer  $k$, and $\lambda  >0$ large enough, we construct a solution of the
Allen-Cahn equation $ \Delta u + \lambda (u-u^3 ) =0$ which is constant along the level sets of $f$ and has exactly
$k$ nodal components. We prove that the nodal components approach the minimal isoparametric hypersurface $\Gamma$ as
$\lambda \rightarrow \infty$ and the energy is uniformly bounded for all such solutions.
\end{abstract}
\maketitle
\section{Introduction}

We will study solutions of the Allen-Cahn equation

\begin{equation}\label{AC}
 \Delta_{g } u + \lambda (u -u^3   )= 0,
\end{equation}

\noindent
with $\lambda >0$, on a closed Riemannian manifold  $(M,g)$. Solutions are the critical points of the energy functional

\begin{equation}
\mathcal{E}_{\lambda} \  (u) = \frac{1}{\sqrt{\lambda}}  \int_M  \frac{ \| \nabla u \|^2}{2}  +  \sqrt{\lambda}  \int_M \frac{(1-u^2 )^2}{4}  .
\end{equation}

The functional and the equation  appeared, at first in Euclidean space,  in the study of modelling of phase transitions \cite{AllenCahn, AllenCahn2}. Note that 
there are three constant solutions, $u = 0, \pm 1$.  In the last years there has been great interest in the study of the Allen-Cahn equation in 
closed Riemannian manifolds and its relation with minimal hypersurfaces. See for instance the article by P. Gaspar and M. Guaraco \cite{GasparGuaraco}. 
The idea is that non-constant solutions with 
$\lambda $ large and uniformly bounded energy should be close to $\pm 1$ on regions separated by a minimal hypersurface, 
which is called
the {\it limit interface}. In  this direction L. Modica  proved in \cite{Modica}, that for a sequence of non-constant solutions of
minimal energy and $\lambda \rightarrow \infty$, 
the nodal set of the solutions converge to a minimal hypersurface $\Gamma$. The energy of the solutions converges to $\sigma . | \Gamma |$,
where $\sigma$ is a universal constant and $ | \Gamma |$ denotes the area of $\Gamma$.
More generally, J. E. Hutchinson and Y. Tonegawa considered in \cite{Hutchinson} a sequence of non-minimizing solutions, 
with $\lambda \rightarrow \infty$ such
that the energy is uniformly bounded, and show that the limit interface is a minimal hypersurface $\Gamma$, with some integer multiplicity $k$. 
This means that the energy of the solutions will converge to $k\sigma |\Gamma |$, and one can picture the nodal sets as $k$ connected components 
converging to $\Gamma$. Other results in this direction are proved for instance in \cite{Guaraco, Tonegawa}.

\bigskip

In the other direction, it is natural to consider the problem if for a given minimal hypersurface $\Gamma$ there exists a sequence of solutions, with
limit interface $\Gamma$. The fundamental result in this direction, in the setting of a closed Riemannian manifold, was
obtained by  F. Pacard and M. Ritor\'{e} in \cite{PacardRitore}: they prove that given a non-degenerate, separating,  
minimal hypersurface,
for $\lambda$ large enough, there exists a solution with nodal set close to $\Gamma$. Here nondegenerate means that the Jacobi
operator of $\Gamma$ has a trivial kernel. And separating means that the complement of $\Gamma$ is the union of two disjoint regions. As $\lambda \rightarrow \infty$ the solutions will converge to $1$ on one of the regions and to $-1$ on the other. 
Moreover, the energy will converge to $\sigma . |\Gamma |$, which means that the limit interface has multiplicity one. A description of this and related results
can be found in \cite{Pacard}.

 The previous result  was generalized by R. Caju and P. Gaspar in \cite{Caju} to the case when all solutions of the Jacobi equation on the minimal
hypersurface $\Gamma$ are generated by isometries: for all $\varepsilon$ small the authors proved the existence of a solution, with limit interface $\Gamma$ with 
multiplicity one. J. Chen and P. Gaspar in \cite{Chen} construct examples of degenerate minimal hypersurfaces which are not the limit interface of
a sequence of solutions to the Allen-Cahn equation.

There are important results also about the construction of solutions with higher multplicity. M. Del Pino, M. Kowalczyk, J. Wei and J. Yang proved in
\cite{DelPino} that given a nondegenerate, separating, minimal hypersurface $\Gamma$, for each positive integer k,  there is a sequence $\lambda_i \rightarrow \infty$ and
solutions $w_i$ of the Allen-Cahn equation with limit interface $\Gamma$ with multiplicity $k$ (in this case the solutions are constructed only for the sequence
$\lambda_i \rightarrow \infty$, not for every $\lambda$ large enough). In \cite{GuaracoMarquesNeves}, M. Guaraco, F. C. Marques and A. Neves show examples 
with multiplicity greater than one in symmetric cases (like the equator on the round sphere), where the solutions exist for all $\lambda$ large enough.

\bigskip

An important role is played by 1-dimensional solutions. On the real line the Allen-Cahn equation is
$u''+ \lambda (u-u^3 )=0$, which is actually equivalent for any $\lambda >0$ to the equation $u'' +u -u^3 =0$.
This last equation has a unique solution $u_0$, which is monotone on $\re$ with limit $\pm 1$, $u_0 (0)=0$. 
The dilated function $u_{\lambda} (t) = u(\sqrt{\lambda} t)$ solves th Allen-Cahn equation and we see that
as $\lambda\rightarrow \infty$ the solution $u_{\lambda}$ converges uniformly on campact sets of $\re - \{  0  \}$ to
$\pm 1$. One can then use this function for $\lambda$ large to construct an approximate solution to the Allen-Cahn 
in a small normal neighborhood of a minimal hypersurface $\Gamma$, which approaches $\pm 1$ away for $\Gamma$.

In this article we will construct solutions with limit interface an isoparametric minimal hypersurface, with any given 
multiplicity. Recall that a smooth function $f:M \rightarrow [t_0 , t_1 ]$ on a Riemannian manifold $(M,g)$ is called {\it isoparametric}, 
if it satisfies that
there exist smooth functions $a, b: [t_0 , t_1 ] \rightarrow \re$ such that  $\Delta f =a \circ f$ and $ \| \nabla f \|^2 = b\circ f$. 
Geometrically it means that the level sets of $f$ are equidistant and have
constant mean curvature. The only critical level sets of $f$ are the minimum and maximum, $f^{-1} (t_i )$, $i=0,1$, which
are called the {\it focal submanifolds} of $f$. More details about isoparametric functions will be given in Section 2.
We will assume that the mean curvature of $f^{-1} (t)$ is a monotone function of $t$. This
happens for instance for every isoparametric function on a closed Riemannian manifold of positive Ricci curvature,
as proved by J. Ge and Z. Tang in \cite{GeTang}. 
In this case there is a unique level set $\Gamma = f^{-1} (a_0 )$ which is a minimal hypersurface. We will prove
the existence of solutions, with any given multiplicity, with interface foliation $\Gamma$, which is called an {\it isoparametric minimal hypersurface}. 
The solutions we build will be $f$-invariant, which means that they are constant along the level sets of $f$. 
Restricted to $f$ invariant functions the Allen-Cahn equation on $M$ reduces to an ordinary differential equation with similarities to the 1-dimensional Allen-Cahn equation. 
We will construct the solutions studying the ordinary differential equation.

\bigskip

In Section 2 we will discuss isoparametric functions in general, and point out in particular that if we write an $f$-invariant 
function $u$ as  $u = \varphi \circ {\bf d}$, where ${\bf d}$ denotes  the distance to one of the focal submanifolds,
then $u$ solves  Equation \ref{AC} if and only if $\varphi$ solves

\begin{equation}\label{ODE1}
\varphi '' (r) + h(r) \varphi ' (r) + \lambda  (\varphi (r) -\varphi (r) ^3 ) = 0 \text{ on } [0,D],
\end{equation}

\noindent 
where $\lambda >0$. The function $h:(0,D)$ is the mean curvature of a corresponding level set of $f$, and it is a smooth function. 
We will assume that $h$ a strictly decreasing 
on the open interval $(0,D)$, $h'(t) <0$ for all $t\in (0,D)$,  verifying that 
$\lim_{t\rightarrow 0} t \ h(t)= n_1$ and $\lim_{t \rightarrow D} (D-t) \  h(t)  = -n_2$, for some positive integers $n_1$ and $n_2$. 
We call $a_0 \in (0,D)$ the unique 0 of $h$. We look for global solutions $\varphi$  to Equation \ref{ODE1}, which means that 
$\varphi$ extends to 0 and $D$, with $\varphi '(0) = \varphi ' (D) =0$.

Then in Section 3 we will use 
shooting techniques to prove, under the conditions stated above on the function $h$,  the existence on solutions of Equation \ref{ODE1} with any given number of zeroes:

\begin{theorem}\label{solutions}
Given a positive integer $k$ there is a positive constant $\lambda_0 = \lambda_0 (k)$ such that if $\lambda > \lambda_0$ then
there is a solution of Equation \ref{ODE1}  with exactly k zeroes. Therefore, if $f$ is an isoparametric function on a 
closed Riemannian manifold of positive Ricci curvature, then there is an $f$-invariant
solution $u$ of Equation \ref{AC} such that the nodal set of $u$ has exactly $k$ connected components, each of them being a level set of
$f$.
\end{theorem}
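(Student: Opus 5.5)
We argue by shooting from the focal submanifold at $r=0$. For $\alpha\in(-1,1)$ let $\varphi_\alpha$ be the solution of Equation \ref{ODE1} with $\varphi_\alpha(0)=\alpha$, $\varphi_\alpha'(0)=0$. Since $t\,h(t)\to n_1$, the singular point at $0$ is regular of the same type as for a radial Laplacian in dimension $n_1+1$. A standard fixed point argument therefore gives existence, uniqueness and continuous dependence on $\alpha$ (and on $\lambda$) on $[0,D)$. The energy $E(r)=\tfrac12\varphi'^2+\tfrac{\lambda}{4}(1-\varphi^2)^2$ satisfies $E'=-h\varphi'^2$, so $E$ is nonincreasing on $(0,a_0]$, where $h>0$. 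We then need a bound showing that $|\varphi_\alpha|$ stays below $1$ on all of $[0,D)$, hence $\varphi_\alpha$ is defined up to $D$. Since $h<0$ past $a_0$, $E$ increases there and $|\varphi|$ could in principle exceed $1$. The plan is to compare with the endpoint $D$: a solution with $\varphi'(D)=0$ is exactly the condition at the other focal point, and near $D$ the equation is again a regular singular problem. The natural setup is therefore a two-sided matching.

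\emph{Step 1 (phase--angle count).} Write $\varphi=\rho\cos\theta$, $\varphi'=\sqrt\lambda\,\rho\sin\theta$ where $\varphi$ is small. For $\alpha$ near $0$ the linearization $\psi''+h\psi'+\lambda\psi=0$ dominates. Its Sturm-type oscillation count on $[0,D]$ grows like $\sqrt\lambda\,D/\pi$, so for $\lambda>\lambda_0(k)$ the solutions with $\alpha$ small have more than $k$ zeros. For $\alpha\to1^-$, $\varphi_\alpha$ stays close to $1$ on $[0,D]$ by continuous dependence, since $u\equiv1$ is a solution, and so has no zeros. The number of zeros $N(\alpha)$ on $(0,D)$ can change only when a zero enters through the endpoint $D$. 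Zeros are simple, so none can be created in the interior, and none enter at $0$ because $\varphi_\alpha(0)=\alpha\neq0$. By continuity, the set $\{\alpha : N(\alpha)\le k\}$ has an infimum $\alpha_k$.

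\emph{Step 2 (boundary condition at $D$).} The key claim is that at a parameter $\alpha^*$ where the count jumps from $k$ to $k+1$, we get the Neumann condition at $D$. The standard trick uses the regular singular behavior at $D$: because $(D-t)h\to-n_2$, every solution that extends continuously to $D$ automatically has $\varphi'(D)=0$. The alternative is a blow-up of $\varphi'$ of order $(D-t)^{-n_2}$, which is incompatible with bounded energy. So we consider the functional $\Theta(\alpha)=$ the total angle $\theta$ swept on $[0,D)$. It is continuous in $\alpha$, tends to $0$ as $\alpha\to1$, and exceeds $(k+1)\pi$ for $\alpha$ small. We choose $\alpha$ with $\Theta(\alpha)=k\pi+\pi/2$ or the appropriate value giving $\varphi'(D)=0$ with exactly $k$ zeros. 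Equivalently, we shoot from both ends, $\varphi(0)=\alpha$ and $\psi(D)=\beta$, and match at $a_0$ using an intermediate value or degree argument on the angle mismatch.

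\emph{Main obstacle.} The hard step is the a priori control needed to make the angle/zero count continuous up to $r=D$. This means excluding solutions that leave $[-1,1]$ or blow up on $(a_0,D)$, where $h<0$ pumps energy. I would first try to show, using $h'<0$ and a modified energy $E$ weighted by the volume factor $e^{\int h}$, that for $\lambda$ large the energy increase on $[a_0,D]$ is $O(\lambda^{1/2})$, relative to a scale where $(1-\varphi^2)^2\lambda/4$ is $O(\lambda)$. This would prevent escape past $\pm1$ except in a shrinking layer near $D$. The matching at $a_0$ then handles that layer.

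Finally, for the geometric statement we take ${\bf d}$ the distance to a focal submanifold and set $u=\varphi\circ{\bf d}$. The results of Section 2, together with Ge–Tang's monotonicity of the mean curvature under positive Ricci curvature, give that $u$ is a smooth solution of Equation \ref{AC}. Its nodal set is the union of the $k$ level sets ${\bf d}^{-1}(r_i)$ at the zeros $r_i$ of $\varphi$, each a connected level set of $f$.
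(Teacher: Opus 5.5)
\textbf{There is a genuine gap.} Your Steps 1--2 shoot only from $r=0$ across all of $[0,D)$, and this cannot work as described.

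Past $a_0$ one has $h<0$ with $(D-t)h(t)\to-n_2$. So $D$ is a regular singular point at which the generic solution is unbounded, like $(D-t)^{1-n_2}$, or $\log(D-t)$ when $n_2=1$. Once $|\varphi|>1$ with $\varphi\varphi'>0$, the cubic term can even force blow-up before $D$. This has three consequences:
\begin{itemize}
\item For generic $\alpha$ the solution $\varphi_\alpha$ is not bounded on $[0,D)$.
\item Your claim that $\varphi_\alpha$ stays near $1$ on all of $[0,D]$ as $\alpha\to1^-$ fails near $D$, because the linearization at $1$ has the same singular solutions.
\item $\Theta$ is not continuous. For an unbounded solution $\varphi'/\varphi\to+\infty$, so its total angle lies in $\pi/2+\pi\mathbb{Z}$. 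A regular solution has $\varphi'(D)=0$ and angle in $\pi\mathbb{Z}$. Thus $\Theta$ takes values in a discrete set and jumps exactly at the Neumann solutions you want; choosing $\Theta(\alpha)=k\pi+\pi/2$ produces a singular solution.
\end{itemize}
Likewise, the zero count never changes by a zero ``entering through $D$'': a bounded solution with $\varphi(D)=\varphi'(D)=0$ is identically $0$. The count changes when the direction of blow-up flips. Turning that into a proof would require classifying the behaviour at $D$ and proving openness of the corresponding sets of $\alpha$, which you do not supply.

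The a priori bound proposed in your last paragraph cannot hold, since escape from $[-1,1]$ really occurs. Your energy also has the wrong sign: $E'=-h\varphi'^2$ holds for $\mathcal{D}=\frac12\varphi'^2-\frac{\lambda}{4}(1-\varphi^2)^2$. Solutions are trapped in $(-1,1)$ on $[0,a_0]$ because $\mathcal{D}\le 0$ at $0$, decreases on $[0,a_0]$, and is $\ge 0$ wherever $\varphi=\pm1$.

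\textbf{What the paper does.} Its proof is the two-sided matching you mention in one sentence, and its substance is exactly what that sentence leaves out.
\begin{itemize}
\item One shoots from $0$ with $w_d(0)=d$ and from $D$ with $\widetilde w_c(D)=c$, $c,d\in[-1,1]$. By the discrepancy argument each solution stays in $(-1,1)$ on its own half-interval, so no control near the opposite endpoint is ever needed.
\item Matching $I(d)=(w_d(a_0),w_d'(a_0))$ with $J(c)=(\widetilde w_c(a_0),\widetilde w_c'(a_0))$ imposes two real conditions. An intermediate value argument on the angle mismatch alone is therefore not enough. The paper instead uses the curves $R_+(d)=(\theta(d),|I(d)|)$ and $S(c)=(\vartheta(c),|J(c)|)$ in the half-plane.
\item First input: as $c,d\to0$ the angles tend to those of the linearized solutions. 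Singular Sturm--Liouville theory then gives $\alpha_0^\lambda(a_0)-\widetilde\alpha_0^\lambda(a_0)\in(-(k+1)\pi,-k\pi]$ for $\lambda\in(\lambda_k,\lambda_{k+1}]$.
\item Second input: wherever $\vartheta(c)=j\pi$ one has $\widetilde w_c'(a_0)=0$, hence $|J(c)|<1$. So $(j\pi,1)$ lies outside the Jordan curve formed by $R_+$, $S$ and the axis segment $[\alpha_0^\lambda(a_0),\widetilde\alpha_0^\lambda(a_0)]$.
\item Conclusion: for $1\le j\le k$ the translate of $R_+$ by $j\pi$ runs from $(j\pi,1)$ to the interior point $(\alpha_0^\lambda(a_0)+j\pi,0)$ of that segment. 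By uniqueness it cannot meet $R_+$, and it cannot meet the axis, so it crosses $S$. This gives a global solution with $\alpha_d(D)=-j\pi$, i.e.\ exactly $j$ zeroes, and yields $\lambda_0(k)=\lambda_k$.
\end{itemize}
Your final geometric paragraph is fine once the ODE solution exists.
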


Next, in Section 4,  we will prove that given a fixed positive integer  $k$, for $\lambda $ large enough, all the zeroes of a solution  with exactly  
$k$ zeroes approach $a_0$:

\begin{theorem}\label{interface} Consider a positive integer $k$.
Fix $\varepsilon >0$. There exists $\lambda_0$ such that if $\varphi$ is a solution of Equation \ref{ODE2} with exactly $k$ zeroes and 
$\lambda \geq \lambda_0$ then if $\varphi (t_0 )=0$ then $t_0 \in (a_0 -\varepsilon , a_0 +\varepsilon )$.  Therefore all the connected components
of the corresponding $f$-invariant solution of Equation \ref{AC} converge to the minimal isoparametric hypersurface $\Gamma$.
\end{theorem}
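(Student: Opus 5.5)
The plan is to use the energy-type function
\[
E(r)=\frac{\varphi'(r)^2}{2}+\frac{\lambda}{4}\bigl(1-\varphi(r)^2\bigr)^2 .
\]
Along solutions of Equation \ref{ODE1} it satisfies
\[
E'(r) = -h(r)\,\varphi'(r)^2 .
\]
So $E$ decreases on $(0,a_0)$, where $h>0$, and increases on $(a_0,D)$, where $h<0$. At the endpoints $\varphi'(0)=\varphi'(D)=0$, which gives $E(0),E(D)\le \lambda/4$. Combining this with the monotonicity shows $E(r)<\lambda/4$ on all of $(0,D)$. Hence $|\varphi|<1$ throughout, and $|\varphi|=1$ is impossible except for the constant solutions. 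At any zero $t_0$ the value $E(t_0)=\varphi'(t_0)^2/2+\lambda/4$ exceeds $\lambda/4$, which contradicts the previous bound.

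That contradiction is too strong, so I first fix the sign convention: the equation in the paper should be read as $-\Delta$ (that is, Equation \ref{ODE2}), with the potential term entering $E$ as $-\frac{\lambda}{4}(1-\varphi^2)^2$. With this sign, consider the quantity
\[
F(r)=\frac{\varphi'^2}{2}-\frac{\lambda}{4}\bigl(1-\varphi^2\bigr)^2 ,
\qquad F'(r)=-h\,\varphi'^2 .
\]
At the zeroes of $\varphi$ we have $F=\varphi'^2/2-\lambda/4$. At critical points $F\le 0$, and $F(0),F(D)\in[-\lambda/4,0]$.

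The key step is to rescale by $s=\sqrt{\lambda}\,(r-t_0)$ and argue by contradiction. Suppose there is a sequence $\lambda_j\to\infty$ with solutions having exactly $k$ zeroes and a zero $t_j\le a_0-\varepsilon$; the case $t_j\ge a_0+\varepsilon$ is symmetric. On $[0,a_0-\varepsilon/2]$ we have $h\ge c>0$, except near $0$, where $h\sim n_1/r$. After rescaling, the equation becomes
\[
\psi''+\lambda^{-1/2}h\,\psi'+\psi-\psi^3=0,
\]
a perturbation of $\psi''+\psi-\psi^3=0$ with damping of order $\lambda^{-1/2}$.

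I would then establish two facts.
\begin{enumerate}
\item[(i)] Solutions with $|\varphi|\le 1$ that cross zero have bounded rescaled distance between consecutive zeroes only if $F$ is near $0$, that is, near the heteroclinic level. Oscillations with $F$ well below $0$ have $O(1)$ rescaled period, so they would produce order $\sqrt{\lambda}$ zeroes on an interval of fixed length.
\item[(ii)] Since there are only $k$ zeroes, between consecutive zeroes and near the ends $\varphi$ must spend most of its time close to $\pm1$, with $F\approx 0$. Each transit close to $\pm1$ over a rescaled length $L$ requires $F$ to be exponentially close to $0$.
\end{enumerate}

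The contradiction comes from integrating $F'=-h\varphi'^2$ across a transition layer at $t_j$. This gives
\[
F(t_j^+)-F(t_j^-)\approx -h(t_j)\sqrt{\lambda}\,\sigma',
\qquad \sigma'=\int u_0'^2 ,
\]
a definite drop of order $\sqrt{\lambda}$. The exact constant $\sigma'$ is not needed, only that the drop is of this size. After the drop, $F\le -c\sqrt{\lambda}$, which is a fixed proportion below the heteroclinic level in rescaled units ($F/\lambda\le -c/\sqrt{\lambda}$). Hence the subsequent motion is a periodic-type oscillation. Each layer decreases $F$ further until $r$ passes $a_0$, because $h>0$ there. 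I then need that this oscillation has period at most $C\log\lambda/\sqrt{\lambda}$ in $r$, which follows from the logarithmic divergence of the period near the separatrix. So on the interval $[t_j,a_0-\varepsilon/2]$, of length at least $\varepsilon/2$, the solution would have more than $k$ zeroes, a contradiction.

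The main obstacle is making this step quantitative. I must show that after a drop of size $\gtrsim\sqrt{\lambda}$ in $F$ the zeroes stay spaced by $O(\log\lambda/\sqrt{\lambda})$, even though $F$ can increase by no more than it decreases. On $(0,a_0)$ this is automatic, since $F$ is nonincreasing there. A secondary difficulty is a zero lying very close to $r=0$, where $h$ is singular. I would handle it by the same energy argument, using $F(0)\le0$ and monotonicity, together with the fact that the singular term only speeds the decrease of $F$.
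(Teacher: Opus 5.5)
\textbf{Verdict.} Your overall strategy works and uses the paper's mechanism, but the two quantitative steps it rests on are asserted rather than proved, so as written there is a genuine gap. Both steps can be closed by elementary arguments, given below.

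\textbf{The opening computation.} The problem in your first paragraph is a miscomputation, not a sign convention in the paper. For $E=\frac{\varphi'^2}{2}+\frac{\lambda}{4}(1-\varphi^2)^2$ one gets $E'=-h\varphi'^2-2\lambda\varphi\varphi'(1-\varphi^2)$. Your $F$, which is the paper's $\mathcal{D}$, satisfies $F'=-h\varphi'^2$ for Equation \ref{ODE2} exactly as written, so no ``$-\Delta$'' reading is needed. Since you then discard $E$, this does no harm.

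\textbf{Where the gaps are.} Your architecture is sound: $F$ is nonincreasing on $(0,a_0)$; a zero $t_j\le a_0-\varepsilon$ lies where $h\ge h(a_0-\varepsilon)>0$ and forces a definite drop of $F$; a definitely negative $F$ forces oscillation incompatible with $k$ zeroes. The two problems are these.
\begin{enumerate}
\item[(a)] The drop $\approx -h(t_j)\sqrt{\lambda}\,\sigma'$ presupposes a heteroclinic profile at $t_j$, which you never establish. Your item (ii) is itself heuristic.
\item[(b)] More seriously, the zero spacing $O(\log\lambda/\sqrt{\lambda})$ is the period asymptotics of the conservative autonomous flow $\psi''+\psi-\psi^3=0$. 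You do not show it survives for the damped, nonautonomous equation. Since $F/\lambda\le -c/\sqrt{\lambda}$ is a vanishing gap below the separatrix, not ``a fixed proportion'', no compactness or continuous-dependence argument covers the $O(\log\lambda)$ rescaled times involved.
\end{enumerate}
The step you call the main obstacle is therefore unproven.

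\textbf{Closing (a).} Suppose $F(t_j)>-1$. Since $F$ is nonincreasing on $[0,t_j]$, we get $\varphi'^2>\frac{9\lambda}{32}-2$ wherever $|\varphi|\le\frac12$ on $[0,t_j]$. Because $\varphi'(0)=0$, $\varphi$ cannot stay in $[-\frac12,\frac12]$ all the way back to $0$, so there is a last $s<t_j$ with $|\varphi(s)|=\frac12$. Then $\int_s^{t_j}\varphi'^2\ge\frac12\sqrt{9\lambda/32-2}$. Using $F(s)\le0$ and $h\ge h(a_0-\varepsilon)$ on $[s,t_j]$ gives $F(t_j)\le-\frac{h(a_0-\varepsilon)}{2}\sqrt{9\lambda/32-2}$, which is $<-1$ for large $\lambda$, a contradiction. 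Hence $F(t_j)\le-1$. This also handles zeroes near $r=0$. The paper instead gets an $O(1)$ drop from H\"older's inequality on $[0,t_z]$ (Lemma \ref{discrepancybound}), which is also enough.

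\textbf{Closing (b).} Drop the period asymptotics. On $[t_j,a_0]$ we have $F\le-1$, hence $|\varphi|<1$ and $1-\varphi^2\ge 2\lambda^{-1/2}$. Feed this $\lambda$-dependent $\delta$ into the Sturm--Picone comparison of the paper's Remark on $[a_0-\varepsilon,a_0-\varepsilon/2]$. Working away from $0$ keeps the weight $e^{\int h}$ bounded independently of $t_j$ and $\lambda$; on $[t_j,a_0-\varepsilon/2]$ it blows up as $t_j\to0$. The comparison gives zeroes spaced $O(\lambda^{-1/4})$ apart, so more than $k$ zeroes for large $\lambda$. The case $t_j\ge a_0+\varepsilon$ follows by reflection, as you say.

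\textbf{Comparison with the paper.} This single comparison is your ``trapped inside the separatrix'' picture made rigorous. The paper instead fixes $\delta$ and bounds separately the lengths of the near-$\pm1$ pieces (Lemmas \ref{closeto1discrepancy}, \ref{closeto1}) and the away-from-$\pm1$ pieces (Lemma \ref{F}), then counts the pieces.
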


Fos a solution $\varphi$ of Equation \ref{ODE1} we define its energy as

$$\mathcal{E}_{\lambda} (\varphi ) = \frac{1}{\sqrt{\lambda}} \int_0^D \frac{\varphi '(t) ^2}{2} dt + \sqrt{\lambda} \int_0^D \frac{(1-\varphi^2 (t) )^2}{4} dt .$$

We note that if $u= \varphi \circ {\bf d}$ is the corresponding $f$-invariant solution of Equation \ref{AC}, and $K$ is an upper bound for the areas of the
level sets of $f$ then $\mathcal{E}_{\lambda} (u) \leq K \mathcal{E}_{\lambda} (\varphi )$.
In Section 5 we will use the previous results to  prove that there is a uniform bound for the energy of solutions of Equation \ref{ODE1}
with a bounded number of zeroes, and form the previous comments we obtain:

\begin{theorem} Fix a positive integer $k$.
There is a constant $C$ such that if $u$ is an $f$-invariant  solution of Equation \ref{AC} such that the nodal set of $u$ has at most $k$ connected 
components,  then
$E_{\lambda} (u) \leq C$.
\end{theorem}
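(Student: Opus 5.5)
The bound on $E_\lambda(u)$ reduces to a bound on $\mathcal{E}_\lambda(\varphi)$, since $\mathcal{E}_\lambda(u)\le K\,\mathcal{E}_\lambda(\varphi)$. For small $\lambda$ (below any fixed $\lambda_0$) the bound is easy. Indeed $|\varphi|\le 1$ by the maximum principle: at an interior maximum with $\varphi>1$ we would have $\varphi''\ge 0$, $\varphi'=0$ and $\lambda(\varphi-\varphi^3)<0$, a contradiction; the endpoints are handled the same way using $\varphi'(0)=\varphi'(D)=0$ and the limit form of the equation there. Integrating the equation against $\varphi$ (on $M$, i.e.\ with weight the area of level sets) bounds $\int\varphi'^2$ by $C\lambda$. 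So the whole problem is $\lambda$ large, and I would treat each number of zeroes $j\le k$ separately.

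The key tool is the modified energy
$$H(t)=\frac{\varphi'(t)^2}{2\lambda}+\frac{(1-\varphi^2)^2}{4}\cdot\frac{1}{1}\ -\ \frac{1}{4}\cdot 0,$$
more precisely $H(t)=\frac{\varphi'^2}{2\lambda}-\frac{(1-\varphi^2)^2}{4}$ (up to the sign convention for the potential $F(\varphi)=\frac{(1-\varphi^2)^2}{4}$, with $F'=-(\varphi-\varphi^3)$). It satisfies $H'=-\frac{h}{\lambda}\varphi'^2$, so $H$ is monotone on $(0,a_0)$ and on $(a_0,D)$: it decreases where $h>0$ and increases where $h<0$. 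At $t=0$ and $t=D$ we have $H=-F(\varphi)\le 0$. It follows that $H\le 0$ on the whole interval $[0,D]$, since $H$ decreases from $H(0)\le 0$ up to $a_0$ and then increases to $H(D)\le 0$. Hence $\varphi'^2/(2\lambda)\le F(\varphi)$ everywhere, which gives the pointwise inequality
$$\frac{\varphi'^2}{2\sqrt\lambda}\le \sqrt\lambda\,F(\varphi).$$
So it suffices to bound $\sqrt\lambda\int_0^D F(\varphi)\,dt$.

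To bound $\sqrt\lambda\int F(\varphi)$, I would split $[0,D]$ at the zeroes $z_1<\dots<z_j$ and at the critical points of $\varphi$. Between consecutive zeroes, or between an endpoint and the nearest zero, $\varphi$ has a single extremum: the ODE forbids a positive local minimum while $|\varphi|<1$, because there $\varphi''=-\lambda(\varphi-\varphi^3)<0$. So $[0,D]$ breaks into at most $2(j+1)$ intervals on which $\varphi$ is monotone.

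The main step is to show that on each monotone piece $\sqrt\lambda\int F(\varphi)\,dt\le C$. This is where I expect the real difficulty: one needs a lower bound on $|\varphi'|$ comparable to $\sqrt{\lambda F}$ except in a region where $F$ is exponentially small. On a monotone piece I would change variables $s=\varphi(t)$, which gives $\sqrt\lambda\int F\,dt=\sqrt\lambda\int F(s)\,ds/|\varphi'|$. I would then use Theorem \ref{interface} (and its proof) to compare with the 1-D profile $u_0$. All zeroes lie in $(a_0-\varepsilon,a_0+\varepsilon)$, and near the zeroes, after rescaling by $\sqrt\lambda$, $\varphi$ is close to translates of $u_0$; there $H\approx 0$, so $|\varphi'|\approx\sqrt{2\lambda F}$ and the contribution is about $\int\sqrt{F/2}\,ds\le C$. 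Away from the zeroes, on a set at distance $\ge L/\sqrt\lambda$ from them, $1-|\varphi|$ should decay exponentially in $\sqrt\lambda\,\mathrm{dist}$. I would prove this by a comparison/linearization argument: $w=1-\varphi$ satisfies $w''+hw'-\lambda c\,w\approx 0$ with $c\approx 2$, and $h$ is bounded on compact subsets, with the endpoint singularities $n_1/t$ and $-n_2/(D-t)$ handled by the Neumann condition, which gives a subsolution barrier like $\cosh$. This yields $\sqrt\lambda\int_{\mathrm{far}}F\le C\sqrt\lambda\int e^{-c\sqrt\lambda\,\mathrm{dist}}\le C'$. Summing over at most $2(k+1)$ pieces gives a bound depending only on $k$, and multiplying by $K$ gives the theorem.
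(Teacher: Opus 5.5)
\textbf{Verdict: correct reductions, but the main step has a genuine gap.} Your preliminary reductions are sound and match the paper. Your $H$ is $\mathcal{D}/\lambda$. The monotonicity argument giving $\mathcal{D}\le 0$ on $[0,D]$ is the paper's, and the count of at most $2(k+1)$ monotone pieces is right. (In the maximum-principle remark, at an interior maximum one has $\varphi''\le 0$, not $\ge 0$; with that sign the contradiction goes through.)

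The step you flag as the main one is not proved, and as sketched it has a hole.

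\emph{Near the zeroes.} The blow-up to translates of $u_0$ and the claim $H\approx 0$ there are asserted, not proved. Fortunately they are unnecessary: an $L/\sqrt\lambda$-neighbourhood of at most $k$ zeroes contributes at most $\sqrt\lambda\cdot 2kL\lambda^{-1/2}\cdot\frac14=kL/2$ simply by length.

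\emph{Away from the zeroes.} The exponential decay has no starting point. Writing $w=1-\varphi$ and linearizing with coefficient $c=\varphi(1+\varphi)\approx 2$ presupposes that $|\varphi|$ is already close to $1$ at distance $L/\sqrt\lambda$ from the zeroes, which is what must be shown.

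The missing ingredient is a quantitative threshold lemma: for $\lambda$ large, $|\varphi|\ge\frac12$ outside an $O(\lambda^{-1/2})$-neighbourhood of the zeroes. It follows from the paper's tools:
\begin{itemize}
\item The Sturm comparison in the Remark preceding Lemma \ref{F}, applied on $[c,D-c]$, bounds by $O(k/\sqrt\lambda)$ the length of any subinterval where $1-\varphi^2\ge\frac34$.
\item A maximal stretch where $|\varphi|<\frac12$ must contain a zero, since there are no positive local minima or negative local maxima.
\item The alternative, that such a stretch touches $0$ or $D$, is excluded by Lemma \ref{elementary} together with the same Sturm count.
\item Theorem \ref{interface} keeps the zeroes, hence these stretches, away from the singular endpoints, where the constant $r_0=e^{\int h}$ in the comparison degenerates.
\end{itemize}
With this input, a \emph{super}solution (not subsolution) $\cosh(\beta\sqrt\lambda\,t)$ with $\beta$ small does absorb the $n_1/t$ singularity, using $t\,h(t)\le C$ and $\sinh x\le x\cosh x$. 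Your route then closes, and it even yields pointwise exponential convergence to $\pm1$ away from $\Gamma$.

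\textbf{How the paper avoids this.} It uses your pointwise inequality in the opposite direction.
\begin{itemize}
\item Since $|\varphi'|\le\sqrt{\lambda/2}$, on each monotone piece $\frac{1}{2\sqrt\lambda}\int\varphi'^2\le\frac{1}{2\sqrt2}\int|\varphi'|\le\frac1{\sqrt2}$. So the \emph{gradient} term is bounded by total variation alone.
\item The identity $\frac{\sqrt\lambda}{4}(1-\varphi^2)^2=\frac{\varphi'^2}{2\sqrt\lambda}-\frac{\mathcal{D}}{\sqrt\lambda}$ then reduces the potential term to a lower bound $\mathcal{D}\ge -C\sqrt\lambda$.
\item $\mathcal{D}$ is minimal at $a_0$, and $0\le h\le h(a_0/4)$ on $[a_0/4,a_0]$. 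Hence $\mathcal{D}(a_0)\ge \mathcal{D}(a_0/4)-h(a_0/4)\int_{a_0/4}^{a_0}\varphi'^2\ge\mathcal{D}(a_0/4)-C\sqrt\lambda$.
\item Finally $\mathcal{D}(a_0/4)\ge -C/\lambda$ by Lemma \ref{closeto1discrepancy}, once Theorem \ref{ODE-interface} and Lemma \ref{F} place $\varphi$ near $1$ and decreasing on $[a_0/4,a_0-\varepsilon]$.
\end{itemize}
Dominating the gradient by the potential, as you did, puts all the weight on the harder term. The paper's direction needs only a single pointwise estimate at $a_0/4$, propagated by the monotonicity of $\mathcal{D}$.
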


\section{Isoparametric functions}

In this section we will give a brief introduction to isoparametric functions on Riemanniana manifolds, and obtain the expression
of Equation \ref{AC} for invariant functions. More details can be found for instance in \cite{Jurgen, Henry}.

\medskip

Let ($M^n,g$) be a closed connected Riemannian manifold of dimension $n$.
A  smooth  function $f:M\rightarrow[t_0,t_1]$  is called {\it isoparametric} if  $|\nabla f|^2=b(f)$, $\Delta f=a(f)$ for 
some smooth functions $a,b :[t_0 , t_1 ] \rightarrow \re$. 
Isoparametric functions on general Riemannian manifolds were introduced and studied  by Q-M Wang in
\cite{Wang}, following the classical work by E. Cartan \cite{Cartan}, B. Segre \cite{Segre}, T. Levi-Civita \cite{Civita},
where isoparametric functions in space forms were studied. 

It was proved by Q-M Wang in
\cite{Wang}    that the only zeros of the function  $b:[t_0,t_1]\rightarrow \mathbb{R}_{\geq 0}$ are $t_0$ and $t_1$, which means that the only critical levels of the isoparametric function $f$
are its global minimum and maximum. Moreover $M_1 = f^{-1} (t_0 )$ and
$M_2 = f^{-1} (t_1 )$ are smooth submanifolds and are called the {\it focal submanifolds}
of $f$. We denote by $d_i$ the dimension of $M_i$. If $d_1, d_2 \leq n-2$ we call $f$ a {\it proper} isoparametric function, as in \cite{GeTang}.
We will assume from now on that $f$ is proper, which implies that all the level sets of $f$ are connected. The level sets are equidistant 
and have constant  mean curvature.
For instance if $g_0$ is the metric of constant curvature one on the unit sphere, then 
any of the coordinate functions is a proper isoparametric function. On the other hand the square of the coordinate function is
also an isoparametric function, but it is not proper.

Let $D=d_g(M_{1},M_{2})$ and  ${\bf d} : M \rightarrow [0, D ]$ denote the distance to $M_1$,
${\bf d} (x) =d_g(M_1 ,x)$. Note that it follows from the previous comments that the level sets of $ {\bf d}$ are the same as the level sets
of $f$. We will consider functions which are constant on the these level sets:

\begin{definition}
A function $u:M\rightarrow \mathbb{R}$ is called $f$-invariant if $u(x)=\phi({\bf d}(x))$ for some  function $\phi:[0,D ]\rightarrow \mathbb{R}$.
\end{definition}

An $f$-invariant function $u$ is of course determined by the corresponding function $\phi$.

The most familiar case of isoparametric functions comes from cohomogeneity
one isometric actions. Assume that $G$ acts isometrically on $(M,g)$ with regular orbits of 
codimension one and that the orbit space is  an interval. If $f$ is a smooth function which is $G$-invariant and
its only critical points are the two singular orbits, then $f$ is isoparametric, and a function is $f$-invariant
if and only if it is $G$-invariant.  Note that in this situation the singular orbits have codimension at least 2, and therefore
the isoparametric function $f$ is proper. 

\medskip

In the rest of the section we will give an outline of how to express Equation \ref{AC} for an $f$-invariant function, which amounts to 
give the expression for the Laplacian of  $u(x)=\phi({\bf d}(x))$ in terms of $\phi$. More details can be found for instance in
\cite{Jurgen}.

\bigskip

Denote by $\mathcal{B} = \{ \phi \in C^{2,\alpha} ([0,D ] ): \phi '(0) =0 = \phi '(D) \}$.

Let
$C^{2,\alpha}_f (M)$ be  the set of $C^{2,\alpha}$ functions on $M$ which are $f$-invariant.  Then the
application $\phi \mapsto u(x)=\phi({\bf d}(x))$ identifies $\mathcal{B}$ with $C^{2,\alpha}_f (M)$.

Now we 
express $\Delta u$ in terms of $\phi$. 

If  $x$ is not in the focal submanifolds, we have
\begin{align*}
\Delta(u(x))&=\Delta(\phi({\bf d}(x))\\
&=\phi^{''}  (  {\bf d}(x) ) \    (|\nabla {\bf d}(x))|^2) + \phi'  ({\bf d}(x) ) \ (\Delta ({\bf d}(x))).
\end{align*}

Also, as explained in \cite{Jurgen}, we have the expression 

$$\Delta {\bf d}(x) = \frac{-b'}{2b\sqrt{b}} (f(x)) |\nabla f|^2 (x)+\frac{\Delta f (x)}{\sqrt{b} (f(x) )} 
=\frac{1}{2\sqrt{b}}(-b'+2a) (f(x) )= h({\bf d} (x) ),$$

\noindent
where $h(t)$ is  the mean curvature  of the hypersurface ${\bf d}^{-1} (t)$. Therefore we have

$$\Delta(u(x)) =  ( \phi^{''} +h \phi' ) ({\bf d}(x)) .$$

The function $h$ is smooth in $(0,D)$. To study the case when $x$ is in the focal submanifolds 
we need to discuss  the asymptotic behaviour of the mean curvature close to focal submanifolds. 
Since each ${\bf d}^{-1} (t)$ is a tube over either $M_{1}$ or $M_{2}$ one can consider Fermi coordinates 
centered in $M_{1}$ or $M_{2}$. J. Ge and Z. Tang 
in \cite{GeTang2} compute the power series expansion formula for the shape operator of ${\bf d}^{-1} (t)$ with respect to the distance to $M_{1}$, and
use it to obtain the expression (see  \cite[Corollary 2.2]{GeTang2}) 

$$h(t)= \dfrac{codim(M_{1})-1}{t}+t(trace(A)+trace(B))+o(t^2),$$

\noindent
where $A,B$ are matrices independent of $t$ (if we consider the expansion close to $M_2$ instead of
$M_1$ we obtain a similar formula, changing the sign). Then we have the following asymptotic behaviour
of $h$ close to the focal varieties:

\begin{lemma}\label{mean-curvature}

$$\lim_{t\rightarrow 0}  t   \ h(t) = n-d_1 -1 \ \ \ ,  \ \ \ \ \lim_{t\rightarrow D} (t-D )   \ h(t) = n-d_2 -1. $$

\end{lemma}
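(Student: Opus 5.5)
The plan is to read both limits off the Ge--Tang expansion just quoted, and to treat the $D$ end by reversing the parameter. Near $M_1$ we use \cite[Corollary 2.2]{GeTang2}:
$$h(t)=\frac{\operatorname{codim}(M_1)-1}{t}+t\,(\operatorname{trace}(A)+\operatorname{trace}(B))+o(t^2).$$
Multiplying by $t$ gives
$$t\,h(t)=\operatorname{codim}(M_1)-1+O(t^2)\longrightarrow \operatorname{codim}(M_1)-1 \quad (t\to 0).$$
Since $M_1$ is a submanifold of dimension $d_1$ in the $n$-manifold $M$, we have $\operatorname{codim}(M_1)=n-d_1$, which gives the first limit $n-d_1-1$. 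It is worth a sanity check that this is the expected leading term. The level set ${\bf d}^{-1}(t)$ is, to first order, a bundle over $M_1$ whose fibers are round spheres of radius $t$ and dimension $n-d_1-1$ in the normal spaces. The mean curvature (trace of the shape operator, with the normalization making $\Delta{\bf d}=h$) of such a sphere is $(n-d_1-1)/t$. The directions tangent to $M_1$ contribute only bounded terms, because $M_1$ is compact and smooth.

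For the second limit we work near $M_2$. Let $\tilde{\bf d}$ be the distance to $M_2$. Because the level sets are equidistant and $D=d_g(M_1,M_2)$, we have $\tilde{\bf d}=D-{\bf d}$. Applying the same expansion with $M_2$ in place of $M_1$, the mean curvature of $\tilde{\bf d}^{-1}(s)$ with respect to $\nabla\tilde{\bf d}$ is
$$\tilde h(s)=\frac{n-d_2-1}{s}+O(s).$$
Next we relate the two functions. Since $\nabla\tilde{\bf d}=-\nabla{\bf d}$, we get $\Delta\tilde{\bf d}=-\Delta{\bf d}$, so $\tilde h(D-t)=-h(t)$. Hence
$$(t-D)\,h(t)=(D-t)\,\tilde h(D-t)\longrightarrow n-d_2-1 \quad (t\to D).$$
This is the claimed second limit and matches the sign change noted in the remark after the expansion.

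The main obstacle is justifying the use of Ge--Tang's expansion in this normalization. Two points need care: that their $h$ agrees with $\Delta{\bf d}$, including sign and the use of trace rather than averaged mean curvature, and that the remainder is $o(1/t)$ uniformly, since each level set has constant mean curvature and compactness of $M_i$ controls the error. If needed, I would verify the leading term independently. In Fermi coordinates around $M_1$ the volume density behaves like $t^{n-d_1-1}\theta(x,t)$ with $\theta$ smooth and positive. Then $\Delta{\bf d}=\partial_t\log(t^{n-d_1-1}\theta)=(n-d_1-1)/t+O(1)$, which gives the same limit.
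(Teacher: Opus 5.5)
Your proposal is correct and takes essentially the same approach as the paper, which also reads both limits off the Ge--Tang expansion using $\operatorname{codim}(M_1)=n-d_1$ and handles the $D$ end by the sign change for the expansion around $M_2$. Your explicit derivation of that sign change via $\tilde{\bf d}=D-{\bf d}$ and $\Delta\tilde{\bf d}=-\Delta{\bf d}$, together with the Fermi-coordinate check of the leading term, spells out what the paper only states in passing.
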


\medskip

Therefore it follows that if $x\in M_1 $ then

\begin{equation}
\Delta u (x) =(n-d_1 )  \phi'' (0) 
\end{equation}

\noindent
and if $x\in M_2 $ then

\begin{equation}
\Delta u (x) =(n-d_2 )  \phi'' (D) 
\end{equation}

\bigskip

Finally  we have 

\begin{lemma}Let $u \in C^{2,\alpha}_f (M)$, $ u(x)=\phi({\bf d}(x))$, with $\phi \in \mathcal{B}$. Then $u$ is a solution of Equation
(\ref{AC}) if and only if the function $\phi$ satisfies 

\begin{equation}\label{ODE}
\phi ''+ h \phi ' + \lambda  (\phi - \phi^3  )=0.
\end{equation}

\noindent 
on $[0, D ]$, where for  $t\in (0,D)$, $h(t)$ is the mean curvature of  ${\bf d}^{-1} (t)$.

\end{lemma}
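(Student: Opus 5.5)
The plan is to split the equivalence into the open dense set $M\setminus (M_1\cup M_2)$ and the two focal submanifolds, and to use continuity to pass between them.

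On the regular part, ${\bf d}$ is smooth with $|\nabla {\bf d}|=1$; this holds because the level sets of $f$ are equidistant and $\nabla {\bf d}=\nabla f/\sqrt{b(f)}$ up to sign. The computation above then gives, for $x$ with ${\bf d}(x)=t\in(0,D)$,
$$\Delta u(x)+\lambda(u-u^3)(x)=\bigl(\phi''+h\phi'+\lambda(\phi-\phi^3)\bigr)(t).$$
Since ${\bf d}$ maps $M\setminus(M_1\cup M_2)$ onto $(0,D)$, every $t\in(0,D)$ is attained. Hence $u$ solves Equation (\ref{AC}) on the regular part if and only if $\phi$ solves Equation (\ref{ODE}) on $(0,D)$.

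It remains to treat the endpoints. Both $u\in C^{2,\alpha}(M)$ and $\phi\in C^{2,\alpha}([0,D])$, so $\Delta u+\lambda(u-u^3)$ is continuous on $M$. The regular part is dense, because the focal submanifolds have codimension at least $2$. Therefore $u$ solves Equation (\ref{AC}) on all of $M$ if and only if it solves it on the regular part. On the ODE side, Equation (\ref{ODE}) at $t=0$ is understood by continuous extension. Since $\phi'(0)=0$ and $\phi\in C^2$, we have $\phi'(t)/t\to\phi''(0)$, and Lemma \ref{mean-curvature} gives $h(t)\phi'(t)=(t h(t))\,\phi'(t)/t\to (n-d_1-1)\phi''(0)$. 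So the left side of Equation (\ref{ODE}) extends continuously to $t=0$ with value $(n-d_1)\phi''(0)+\lambda(\phi(0)-\phi(0)^3)$. This matches the formula $\Delta u(x)=(n-d_1)\phi''(0)$ for $x\in M_1$, and the same argument applies at $D$ using $\phi'(D)=0$ and the second limit in Lemma \ref{mean-curvature}. Consequently $\phi$ satisfies Equation (\ref{ODE}) on $(0,D)$ if and only if it satisfies the extended equation on $[0,D]$, and this extended equation is exactly Equation (\ref{AC}) on $M_1$ and $M_2$.

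The delicate point is to justify the pointwise formula for $\Delta u$ on the focal submanifolds. Rather than computing in Fermi coordinates, I would avoid it through the continuity argument: $\Delta u$ is continuous on $M$, and on regular points it equals $(\phi''+h\phi')({\bf d})$, whose limit as ${\bf d}\to 0$ was computed above. This yields the value at $M_1$ directly. Smoothness of $u$ itself across $M_i$ is part of the standing identification of $\mathcal{B}$ with $C^{2,\alpha}_f(M)$, which I take as given.
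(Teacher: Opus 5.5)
Your proposal is correct and follows the paper's route: the regular-part identity $\Delta u = (\phi''+h\phi')(\mathbf d)$ with $|\nabla \mathbf d|=1$, combined with the limits in Lemma \ref{mean-curvature}, gives $\Delta u=(n-d_1)\phi''(0)$ on $M_1$ and $\Delta u=(n-d_2)\phi''(D)$ on $M_2$. You only make explicit the continuity and density step that the paper leaves implicit in its ``therefore it follows''. One small remark: density of the regular part needs only that the focal sets have positive codimension, not codimension $\ge 2$.
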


We will assume the condition that the function $h$ is stricly monotone. So in particular there is exactly
one point $a_0 \in (0,D)$ such that $h(a_0 )=0$. This level set $\Sigma = {\bf{d}}^{-1} (a_0 )$ is called the 
minimal isoparametric hypersurface corresponding to $f$.  Finally we point out that in case the Ricci curvature of
$(M,g)$ is positive it is always the case that $h$ is strictly monotone, as proved by J. Ge and Z. Tang  in \cite{GeTang}.

\section{Solutions of the  ordinary differential equation}

In this section we consider functions invariant by an isoparametric function so that  the equation can be reduced to an ordinary differential equation,
and construct solutions 
applying a shooting method to this ordinary differential equation. We consider then the equation

\begin{equation}\label{ODE2}
w'' (r) + h(r) w' (r) + \lambda  (w(r) -w(r) ^3 ) = 0 \text{ on } [0,D],
\end{equation}

\noindent 
where $\lambda >0$ and $h$ a strictly decreasing smooth function on the open interval $(0,D)$, $h'(t) <0$ for all $t\in (0,D)$,  verifying that 
$\lim_{t\rightarrow 0} t \ h(t)= n_1$ and $\lim_{t \rightarrow D} (D-t) \  h(t)  = -n_2$, for some positive integers $n_1$ and $n_2$. 
We call $a_0 \in (0,D)$ the unique 0 of $h$. We will be interested in global solutions, which verify that $w' (0) = w'(D) =0$.

\bigskip

For any value $d\in \re$ there is a uniquely defined solution $w_d$ such that 
$w_d (0)=d$ and $w_d '(0)=0$, defined in an open interval containing 0 (and we have a similar statement for the initial condition on $D$).

A global solution will verify that $w'(0) = w' (D) =0$ and it is uniquely determined
by the initial value $w(0)$.

\bigskip

For a solution $w$ of Equation \ref{ODE2} we consider the  function

$$\mathcal{D} (w)= \frac{(w')^2}{2} -\frac{\lambda}{4} (1- w^2 )^2 .$$

By a direct computation 

$$\mathcal{D} (w)' (r) = -h(r) (w'(r))^2 .$$

Therefore $ \mathcal{D} (w)$ is decreasing in $[0,a_0 ]$ and increasing in $[a_0 , D ]$. 
Note also that $\mathcal{D} (w) (0)= -(\lambda /4) (w(0)^2 -1)^2 \leq 0$.  And it is
strictly negative in case $| w(0) | \neq 1$. Note also that if $w(t)=\pm 1$ then $\mathcal{D} (w) (t) \geq 0$ and the equality holds if
and only if $w' (t) =0$ which would imply that $w$ is constant. It follows that for any solution $w$ with $w(0) \in (-1,1)$
we have that $w(t) \in (-1,1)$ for all $t$ in the maximal interval of definition of $w$ in $[0,a_0 ]$. It also follows from
$\mathcal{D} (w) \leq 0$ that $(w')^2 \leq (\lambda / 2)$.

The previous comments imply that for any $d\in [-1,1]$ the solution $w_d$ of the Equation \ref{ODE2} with initial conditions
$w(0)=d$, $w'(0)=0$ is defined at least in the interval $[0,a_0 ]$. Similarly, the solutions with initial  conditions on $D$ will be
defined at least in the interval $[a_0 , D ]$.

\medskip

{\bf Close to $d=0$, the linearized equation}.  Let $v = \frac{\partial w_d}{\partial d} :[0, D) \rightarrow \re$ be the solution of the linearized equation at $d=0$

\begin{equation}\label{ODEL}
v'' +  h(r) v' + \lambda  v= 0 \text{ on } [0, D),
\end{equation}

\noindent
with initial conditions $v'(0)=0$, $v(0)=1$. It is well known that there
is a sequence of  eigenvalues $0< \lambda_1 < \lambda _2, ...$ so that the corresponding solution $v_k$ when  $\lambda = \lambda_k$ is a global solution: $v_k$
extends to $D$ with $v_k ' (D) =0$. By the Sturm oscillation theorem for singular Sturm--Liouville problems, $v_k$ has exactly $k$ zeroes in $(0,D)$. 

\bigskip

The following result implies Theorem \ref{solutions}:

\begin{theorem}
If $\lambda \in  (\lambda_k , \lambda_{k+1} ]$ for each positive integer $j=1,\dots, k$ there is at  least one solution of
Equation \ref{ODE2}  which has exactly $j$ zeroes in $(0,D)$.

\end{theorem}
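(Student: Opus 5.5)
We argue by shooting from both endpoints and counting zeroes with a continuity argument in the initial value. For $d\in(-1,1)$ let $w_d$ be the solution of Equation \ref{ODE2} with $w_d(0)=d$, $w_d'(0)=0$. By the energy argument above, $w_d$ is defined on $[0,a_0]$ and stays in $(-1,1)$ there. We cannot use the same argument past $a_0$, since $\mathcal{D}$ increases on $[a_0,D]$. So we match with solutions shot from $D$: for $e\in(-1,1)$ let $z_e$ solve the equation with $z_e(D)=e$, $z_e'(D)=0$, defined on $[a_0,D]$ with values in $(-1,1)$. A global solution corresponds to a pair $(d,e)$ with $w_d(a_0)=z_e(a_0)$ and $w_d'(a_0)=z_e'(a_0)$.

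Rather than a two-parameter matching, we introduce Prüfer-type angles. Write $w_d=\rho\sin\theta$ and $w_d'=\sqrt{\lambda}\rho\cos\theta$ on $[0,a_0]$, with $\theta_d(0)=\pi/2$ for $d>0$. Define $\psi_e$ similarly for $z_e$ on $[a_0,D]$, measured backwards from $D$. The number of zeroes of a glued solution is then read off from $\theta_d(a_0)$ and $\psi_e(a_0)$. The key steps are as follows.

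\begin{enumerate}
\item \emph{Behaviour as $d\to 0$.} Here $w_d/d\to v$, the solution of the linearized equation \eqref{ODEL}. Since $\lambda\in(\lambda_k,\lambda_{k+1}]$, Sturm comparison with $v_k$ and $v_{k+1}$ shows that the linear solution from $0$ and the linear solution from $D$ have total angle mismatch at $a_0$ lying between $k\pi$ and $(k+1)\pi$, i.e. the linear problem "wants" $k$ zeroes. (At $\lambda=\lambda_{k+1}$ one uses that equality gives exactly $k+1$ zeroes only in the limit, and handles it by strictness near $d=0$ of the nonlinear correction.)
\item \emph{Behaviour as $d\to 1$.} Since $w\equiv1$ is a solution, $w_d$ is uniformly close to $1$ on $[0,a_0]$ by continuous dependence. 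Hence the angle stays near $\pi/2$ and no zeroes appear; similarly for $e\to\pm1$.
\item \emph{Nonlinear frequency.} The nonlinearity $\lambda(1-w^2)$ is smaller than $\lambda$, so the nonlinear oscillation is slower than the linear one. Thus the angle function for the nonlinear problem decreases from the linear value (about $k$ half-turns in total) toward $0$ as the amplitude goes from $0$ to $1$.
\end{enumerate}

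Define $F(d,e)$ as the angle mismatch at $a_0$. It is continuous in $(d,e)$ because solutions never reach $\pm1$ and never have $\rho=0$, by uniqueness. One-parameter shooting then works if we restrict to a curve: for instance, fix $e$ as a function of $d$ so that the energies match, i.e. $\mathcal{D}(w_d)(a_0)=\mathcal{D}(z_e)(a_0)$, which determines $|z_e(a_0)|$ matching $|w_d(a_0)|$ up to the derivative condition. Along a path from $d\approx0$ to $d\approx1$, the intermediate value theorem yields, for each $j=1,\dots,k$, a parameter at which the mismatch equals $j\pi$. The sign of $e$ can be chosen so the phases agree, and such a parameter gives a global solution with exactly $j$ zeroes.

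The main obstacle is making the matching genuinely one-dimensional and controlling the total number of zeroes continuously. It is cleaner to shoot only from $0$ on all of $[0,D)$ and study the limit as $t\to D$, but then we must rule out blow-up of $w_d$ on $(a_0,D)$ and show that $w_d'(t)\to0$ or $w_d$ diverges. I would first try exactly that: bound $|w_d|$ on $[a_0,D)$ using the singular coefficient $h\sim -n_2/(D-t)$, show that the zero count $N(d)$ on $(0,D)$ changes only when $w_d$ is a global solution (a zero cannot enter through $0$, and can enter through $D$ only at a global solution), and combine $N(d)\ge k$ for small $d$ (from step 1) with $N(d)=0$ for $d$ near $1$ (from step 2). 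Every intermediate value $j$ is crossed, and at each crossing the solution is global with $j$ zeroes, or with $j$ or $j+1$ zeroes, which a careful count resolves. Justifying that the zero count can jump only at global solutions near the singular endpoint $D$ is the delicate part.
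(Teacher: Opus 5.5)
\textbf{There is a genuine gap: nothing you prove actually produces a matching pair.}

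Your setup agrees with the paper's. You shoot from both endpoints and use phase-plane angles. The linearization gives a total mismatch in $(k\pi,(k+1)\pi]$ as $d,e\to0$, the mismatch is $0$ at $d=e=1$, and you read off the number of zeroes from a mismatch of $j\pi$. At $\lambda=\lambda_{k+1}$ nothing extra is needed, since $j\pi<(k+1)\pi$ for $j\le k$.

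You propose to ``fix $e$ as a function of $d$'' with $\mathcal{D}(w_d)(a_0)=\mathcal{D}(z_e)(a_0)$, and then apply the intermediate value theorem along that path. Nothing gives a continuous $e(d)$. The map $e\mapsto\mathcal{D}(z_e)(a_0)$ runs from $-\lambda/4$ to $0$, but it is not known to be monotone, so the matching set need not be a graph over $d$. Your step 3 is an unproved heuristic, and it would not produce such a graph anyway.

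Also, equal discrepancy does not give $|z_e(a_0)|=|w_d(a_0)|$. It only puts both phase points on one level curve of $\mathcal{D}$. To get $I(d)=\pm J(e)$ from a mismatch of $j\pi$, you must also use that $\mathcal{D}$ is strictly increasing along rays from the origin in $\{|w|<1\}$.

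Your fallback breaks at its first step. For non-global $d$ no bound on $|w_d|$ over $[a_0,D)$ exists, because a solution bounded there is automatically global. To see this, integrate $(rw')'=-\lambda r(w-w^3)$ with $r=e^{\int h}\sim C(D-t)^{n_2}$; this forces $rw'\to0$ and then $w'=O(D-t)$. So every non-global solution is unbounded, possibly blowing up before $D$, and $N(d)$ is not even defined without a blow-up analysis.

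\textbf{The missing idea is the paper's planar topological argument.} It needs neither a one-parameter reduction nor any monotonicity.
\begin{itemize}
\item Let $\theta(d)$ and $\vartheta(c)$ be the continuous angles of $I(d)=(w_d(a_0),w_d'(a_0))$ and $J(c)=(z_c(a_0),z_c'(a_0))$. Normalize by $\theta(1)=\vartheta(1)=0$; their limits $\alpha_0(a_0)$ and $\widetilde{\alpha}_0(a_0)$ at $0$ come from the linearized solutions.
\item Work in the (angle, radius) plane with $R_+(d)=(\theta(d),|I(d)|)$ and $S(c)=(\vartheta(c),|J(c)|)$, $c,d\in(0,1]$. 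Both start at $(0,1)$. Away from that point they lie in $\{\text{angle}<0\}$ and $\{\text{angle}\ge0\}$ respectively. They end at $(\alpha_0(a_0),0)$ and $(\widetilde{\alpha}_0(a_0),0)$, and together with the segment between these endpoints they form a Jordan curve.
\item For $1\le j\le k$, the translate $R_++j\pi$ starts at $(j\pi,1)$, which lies in the unbounded component. This is because $\vartheta(c)=j\pi$ forces $z_c'(a_0)=0$ and hence $|J(c)|<1$.
\item The translate ends at an interior point of the segment, since $\alpha_0(a_0)+j\pi<\widetilde{\alpha}_0(a_0)$, so it enters the bounded component.
\item It cannot meet $R_+$ (by uniqueness and $w_{-d}=-w_d$), nor the segment (since $|I(d)|>0$). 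Hence it meets $S$. This gives $I(d)=\pm J(c)$ and a global solution with $\alpha_d(D)=-j\pi$, i.e.\ exactly $j$ zeroes.
\end{itemize}

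Alternatively, your discrepancy idea can be rescued by replacing the path with a continuum. The function $F(d,e)=\mathcal{D}(w_d)(a_0)-\mathcal{D}(z_e)(a_0)$ is negative on the left and top edges of $[0,1]^2$ and positive on the bottom and right edges, except at the corners $(0,0)$ and $(1,1)$. So $\{F=0\}$ contains a connected set joining these corners, on which the mismatch takes each value $j\pi$. Proving that, however, requires a planar separation lemma of the same depth as the Jordan curve theorem.
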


\begin{proof} To find solutions of Equation \ref{ODE2} for each $c, d \in [-1,1]$ we consider the solution $w_d$ which is defined on the 
interval $[0,a_0 ]$ and the solution $\widetilde{w}_c$ such that     $\widetilde{w}_c (D)=c$ and  $\widetilde{w}_c '(D)=0$,  
which is defined in the interval $[a_0 , D]$. If for some values $c , d \in [-1,1]$ we have that
$w_{d} (a_0 ) =\widetilde{w}_{c} (a_0 )$ and $w_{d} '  (a_0 ) =\widetilde{w}_{c} ' (a_0 )$ then $w=w_{d}$ can be extended as a global solution
of Equation \ref{ODE2}. 

To be more precise, we consider the initial value problem

\begin{equation}\label{Initial}
\left\{\begin{tabular}{cc}
$w_{i}''(r) + h(r)  w_i'(r) + \lambda  ( w_i -w_i^3) =0$& in $[0,a_0]$,\\
$w_i(0)=d, w_i'(0)=0,$ &
\end{tabular}\right.
\end{equation}
and the ``final'' value problem
\begin{equation}\label{Final}
\left\{\begin{tabular}{cc}
$w_f''(r) + h(r)  w_f'(r) + \lambda ( w_f -w_f^3 ) =0$& in $[a_0,D]$,\\
$w_f(D)=c, w_f'(D)=0,$ &
\end{tabular}\right.
\end{equation}
and we look for initial and final conditions $d$ and $c$ such that $w_i(a_0,d)=w_f(a_0,c)$ and $w'_i(a_0,d)=w'_f(a_0,c)$, so that, by uniqueness of the solution, we have a well defined 
global solution $w$ 
of Equation \ref{ODE2} given by $w(r)=w_i(r,d)$ if $r\in[0,a_0]$ and $w(r)=w_f(r,c)$ if $r\in[a_0, D]$.

We point out that the problem \eqref{Final} can be written as an initial condition problem having the form of \eqref{Initial}. Indeed, if we consider the function $\widetilde{h}(r)=-h(D-r)$, 
then $w_f$ solves \eqref{Final} if and only if $\omega(r)=w_f(D-r)$ solves the initial value problem
\begin{equation}\label{Eq:Singular backward equivalent}
\left\{\begin{tabular}{cc}
$\omega''(r) + \widetilde{h}(r) \omega'(r) + \lambda (\omega(r) -\omega^3 (r)) =0$& in $[0,D-a_0]$,\\
$\omega(0)=c, \omega'(0)=0,$ &
\end{tabular}\right.
\end{equation}
So understanding Equation \ref{Initial} is enough to also understand Equation \ref{Final}.

\bigskip

For $d\in[-1,1]$ let $w_d$ be the solution of \ref{ODE2}
with initial conditions $w_d (0)=d$, $w_d' (0)=0$. Note that $w_{-1}, w_0$ and $w_1$ are constant functions and
that $w_{-d}=-w_d$. Note also that if $d\neq -1 , 0 , 1$ and $r$ is a critical point of $w_d$ then $w_d (r) \neq -1 , 0 ,1$;
moreover $w_d$ has a local minimum (resp. maximum) at $r$ then  $w_d (r) \in (-1 , 0 ) $ (resp. $w_d (r) \in (0,1)$).

\medskip

Now consider the curve $I: [-1,1]  \rightarrow \re^2$ given by $I(d) = (w_d (a_0 ) , w_d ' (a_0 ) )$. Note that
$I(0) = (0,0)$, $I(1)=(1,0)$, $I(-d) =-I(d)$ and $I(d) \neq (0,0)$ if $d\neq 0$. It is then easy to see that we have
a well defined continuous function $\theta : (0, 1 ]  \rightarrow \re$ such that $\theta (1)=0$
and $\theta (d)$ gives an angle between $I(d)$ and the positive $x$-axis for any $d>0$. 

For any fixed $d\in (0,1]$ we can also define a continuous function $\alpha_d : [0, a_0 ] \rightarrow \re$
such that $\alpha_d (0)=0$ and for all $t \in [0,a_0 ]$ $\alpha_d (t)$ is an angle between $(w_d (t), w_d'(t))$ and
the positive $x$-axis. It is easy to check that $\theta (d) = \alpha_d (a_0 )$ (they both define the same angle, so the
difference is of the form $2k\pi$ for some integer $k$, they are continuous with respect to $d$ and coincide for $d=1$).  

Note that the function $\alpha_d$ can be extended beyond $a_0$, up to the maximal interval where
$w_d$ is defined. In case $w_d$ is a global solution $\alpha_d$ is defined in the whole interval
$[0,D ]$. In this last case $\alpha_d (D )$ is $-k\pi$ for some positive integer $k$ and the number
of zeroes of $w_d$ in $(0,D)$  is exactly $k$.

\medskip

Let $v:[0, \pi)$ be the solution of the linearized equation at $d=0$,
$v'' +  h(r) v' + \lambda  v= 0$  on $[0, D)$,
with initial conditions $v'(0)=0$, $v(0)=1$. Define a
function $\alpha_0  :[0, D) \rightarrow \re$ such that $\alpha (0) =0$ and $\alpha_0  (t) $ is an angle between
$(v(t),v'(t))$ and the positive $x$-axis. Note that $lim_{d\rightarrow 0} \  \theta (d) =
\lim_{d\rightarrow 0} \alpha_d (a_0 ) = \alpha_0  (a_0 )$.
It  follows that we can extend $\theta$ to a continuous function $\theta : [0,1] \rightarrow \re$.
Note also that if $\lambda =\lambda_k$ is
an eigenvalue of the linearized problem then $\alpha _0$ extends to $D$ and $\alpha_0 (D) = -k \pi$.

Note also that for $d<0$ we have that $\theta (-d) +\pi$ is an angle between $(w_d (a_0 ) ,w_d' (a_0 ))$ and
the positive $x$-axis. We can then extend the definition of $\theta$ to $[-1,1]$, but it is not continuous at 0:
$\lim_{d\rightarrow 0^-}  \theta (d) = \theta (0) +\pi$.

If $d\neq 0$ note that $w_d (a_0 )=0$ if and only if $\theta (d)  = \alpha_d (a_0 ) = -\frac{\pi}{2} - k \pi$ for some  integer $k$.
Actually, for each $t\in (0,a_0]$ we have that $w_d (t) =0$  if and only if $\alpha_d (t ) = -\frac{\pi}{2} - k \pi$ for some  integer $k$.

\bigskip

For $d\neq 0$ define $n(d)$ as the number of zeroes of $w_d$ in $[0, a_0 ]$. Note that $n(d)=n(-d)$ and that
$\theta (d)$ determines $n(d)$. This last statement follows from the identity $\theta (d)= \alpha_d (a_0 )$. For $d>0$, 
$\alpha_d (0) =0$ and  the solution $w_d$ has a local maximum in $0$. Then $w_d$ is strictly decreasing until it reaches a local minimum $t_2$,
where $w_d$ is negative. Up to his point  $\alpha_d$ is negative since $w_d'$ is negative. There is a unique point $t_1 \in (0,t_2 )$ such that $w_d (t_1 )=0$
and we have that  $\alpha_d (t_1 ) = -\pi /2$.  In the interval $(t_1 ,t_2 )$ we have that $\alpha_d$ is decreasing and $\alpha_d (t_2 ) =-\pi$.  
It should be clear from the previous comments that for $t\in (0,t_2 )$ we have that $\alpha_d (t)$ determines the number of zeores of $w_d$ in the 
interval $(0,t]$, and
we can repeat this argument until
reaching $a_0$.

\bigskip

Now we proceed  in the same way  for the solutions to problem \ref{Final} in $[a_0 ,D]$ with initial condition $w'(D)=0$. 

For $c\in [-1,1]$, denote by $\widetilde{w}_c$ the solution to the problem \ref{Final} and define the map $J(c):=(\widetilde{w}_c(a_0),\widetilde{w}_c'(a_0))$.

We have that  $J(1)=(1,0)$, $J(0)=(0,0)$, $J(c) \neq (0,0)$ if $c\neq 0$ and $J(-c)=-J(c)$. So, there is a well defined argument function $\vartheta :(0,1] \rightarrow \re$ such that
$\vartheta (1) =0$ and 

\[
J(c)=(\vert J(c)\vert\cos(\vartheta(c)),\vert J(c)\vert\sin(\vartheta(c)).
\]

\bigskip

For $c\in (0,1]$, we also have the function $\widetilde{\alpha}_c : [a_0 , D] \rightarrow \re$ such that $\widetilde{\alpha}_c (D)=0$ and $\widetilde{\alpha}_c (t)$ gives an
angle between $(\widetilde{w}_c (t), \widetilde{w}_c ' (t))$ and the positive $x$-axis.
Then $\widetilde{\alpha}_c (a_0 )= \vartheta (c)$.

We let $\widetilde{v}$ be the solution of the linearized equation with $\widetilde{v} (D )=1$, $\widetilde{v} ' (D)=0$.
Let $\widetilde{\alpha_0} :(0,D] \rightarrow \re$ be the continuous function such that 
$\widetilde{\alpha} (D) =0$  and $\widetilde{\alpha} (t)$ gives an angle between 
$(\widetilde{v} (t ), \widetilde{v}' (t ))$ and the positive $x$-axis.

All statement made for the initial value problem can be rewritten appropriately for the final value problem, including
the facts about the linearized equation.

Note that $\lim_{c\rightarrow 0} \vartheta (c) = \widetilde{\alpha_0} (a_0 )$

Let us denote $v=v^{\lambda}$, $\alpha_0 =\alpha_0^{\lambda}$, $\widetilde{v}=\widetilde{v}^{\lambda}$ and $\widetilde{\alpha}_0 =\widetilde{\alpha}_0^{\lambda}$, to
make explicit the dependence on $\lambda$. Note that with this notation we have that  $\alpha^{\lambda_k}_0 (a_0 ) - \widetilde{\alpha}^{\lambda_k}_0 (a_0 ) = -k\pi$. Indeed, $v^{\lambda_k}$ and $\widetilde{v}^{\lambda_k}$ correspond to eigenfunctions asociated with $\lambda_k$, then they are  proportional. There exist $s\in \mathbb{R}$ such that $\widetilde{v}^{\lambda_k}=v^{\lambda_k}$. Actually, $s=1/v^{\lambda_k}(D)$. Since $v^{\lambda_k}(0)=1$ and has exactly $k$ zeroes we have that the sign of $s$ is $(-1)^k$. Also, $(v^{\lambda_k})' (D)=0$, then $\alpha_0^{\lambda_k}(D)=-k\pi$. From this it follows that $\widetilde{\alpha}_0^{\lambda_k}(t)-\alpha_0^{\lambda_k}(t)+k\pi$ for any $t\in [0,D]$.

Since $\alpha_0^{\lambda}(a_0)-\widetilde{\alpha}_0^{\lambda}(a_0)$ is  decreasing as a function of $\lambda$, we have that $-(k+1)\pi<\alpha_0^{\lambda}(a_0)-\widetilde{\alpha}_0^{\lambda}(a_0)<-k\pi$ whenever  $\lambda\in (\lambda_k,\lambda_{k+1})$.

\vspace{.4cm}

Recall that if $w$ is a solution of Equation \ref{ODE2} then $-w$ is also a solution. We will consider solutions such that $w(D ) \in (0,1]$. It 
could be that $w(0) \in (0,1]$ or that $w(0) \in (-1,0)$. To prove the theorem we must show that if $\lambda \in (\lambda_k , \lambda_{k+1}]$ 
then for each $j=1 \dots , k$ there is at least one such solution with exactly $j$ zeroes in $(0,D)$. 

We define three  curves  $R_+ : (0,1] \rightarrow \re\times\re_{>0}$ ,
$R_- : (-1,0) \rightarrow \re\times\re_{>0}$, and $S:(0,1]\rightarrow\re\times\re_{>0}$, given by
\[
R_+ (d):=(\theta(d),\vert I(d) \vert)\qquad\text{,}\qquad S(c):=(\vartheta(c),\vert J(c)\vert)\qquad\text{and}\qquad R_- (d) = (\theta (-d) + \pi , \vert I(d) \vert ) .
\]

\noindent

The image of $R_-$ is obviously just  the horizontal traslation of the image 
of $R_+$ by $\pi$. Note that $R_+ (1)=(0,1)$, for any $t \in (0,1)$ we have that the first coordinate of $R_+ (t)$ is negative, and
$\lim_{t\rightarrow 0} R_+ (t) = (\alpha_0 (a_0 ),0)$. Similarly, $S(1)=(0,1)$ and 
the first coordinate of $S$ is non-negative. The curve $S$ starts at $(0,1)$  and ends at the point
$(\widetilde{\alpha_0} (a_0 ) , 0 )$. Observe that $R_+$, $R_-$ and $S$ give, respectively,  the polar coordinates of $( w_{d} (a_0 ), w_{d}' (a_0 ) )$,  $d\in (0,1]$, of $( w_{d} (a_0 ), w_{d}' (a_0 ) )$, $d\in [-1,0)$, and of $(\widetilde{w}_c  (a_0),\widetilde{w}_c '  (a_0) )$, $c\in (0,1]$.

Note also that the images of $R_+$ and $R_-$ do not intersect: if $R_+ (d_1 ) = R_- (d_2 )$ then we would have that
$( w_{d_1} (a_0 ), w_{d_1} ' (a_0 ) )= ( w_{d_2} (a_0 ), w_{d_2} ' (a_0 ) )$, which would imply by uniqueness of solutions that 
$w_{d_1} = w_{d_2 }$. Similarly, the image of $R_+$ cannot intersect its horizontal traslation by $j\pi$ for any integer $j$.

The length of the segment $[\alpha (a_0 ), \widetilde{\alpha} (a_0 ) ]$ falls in the interval $(k\pi , (k+1) \pi ]$. 
The segment together with $R_+$ and $S$ form a closed simple curve.

If $\vartheta (c) = j\pi$ for some integer $j$ then 
$\widetilde{w}_c '  (a_0) =0$, and therefore $| J(c) | < 1$. This implies that for  any $j \in \{ 1,\dots ,k \}$ the point $(j\pi , 1)$ is 
in the unbounded component of the complement of the simple closed curve. On the other hand, $\alpha_0 (a_0) +j\pi < 
\widetilde{\alpha_0} (a_0 )$. This shows that for each $j$, the horizontal  traslation of $R_+$ by $j\pi$ has points both in the bounded and the
unbounded components of the simple closed curve. Therefore, the horizontal traslation must intersect $S$.  If $d$ and  $c$ are such
that $(\theta(d) +j \pi ,\vert I(d) \vert) = (\vartheta(c),\vert J(c)\vert)$ then  $I(d)=J(c)$ when  $j$ is even, while 
$I(-d)=-I(d)= J(c)$ when $j$ is odd. Hence, depending on the parity of $j$, either $w_d$ or $w_{-d}$  extends to a global solution.  Assume first tha $w_d$, with $d\in (0,1)$, is the global solution, then $\alpha_d(a_0)-\widetilde{\alpha}_c(a_0)=-j\pi$. It follows that $\alpha_d(t)-\widetilde{\alpha}_c(t)=-j\pi$ for any $t$. In particular, $\alpha_d(D)=-j\pi$. Therefore, $w_d$ has exactly $j$ zeroes in $(0,D)$. If $w_{-d}$ is the global solution, then $-w_{-d}$ is also a global solution and has the same zeroes.

\end{proof}

\section{Nodal pieces}

In this section we consider the ordinary differential equation

\begin{equation}\label{ODE3}
w'' (t) + H(t) w' + \lambda  (w-w^3 ) (t) = 0 \text{ on } (0, T],
\end{equation}

\noindent
where we will  assume that $\lambda >0$, $\lim_{t\rightarrow 0} t H(t) = n_1$, for some positive integer $n_1$, $H:(0,T] \rightarrow [0,\infty )$ is smooth and decreasing, $H'(t)<0$ 
for all $t\in(0,T]$, and $H(T)=0$. We consider solutions 
on $(0,T]$ which extend continuously to 0, so we have the initial condition $w'(0)=0$. We are essentially considering the equation \ref{ODE2} of the previous section restricted to
the interval $[0,a_0]$.

\medskip

The purpose of this section is to prove that if $w$ is a solution of the equation with a bounded number of zeroes and $\lambda$ is  large, all the zeroes of the solution $w$
are close to the end point $T$:

\begin{theorem}\label{ODE-interface}
Fix $n_0 \in N$. For any $\varepsilon >0$, there exists $\lambda_0 >0$ such that: if $\lambda > \lambda_0$ and $w$ is a solution of 
Equation \ref{ODE3} with at most $n_0$ zeroes in $[0, T]$, then if $w(t_0 )=0$  we have that $t_0 \in (T-\varepsilon , T]$. 
\end{theorem}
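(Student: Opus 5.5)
The plan is to work in the rescaled variable $s=\sqrt{\lambda}\,t$, which turns Equation \ref{ODE3} into
$$w_{ss}+\frac{H(s/\sqrt\lambda)}{\sqrt\lambda}\,w_s+w-w^3=0 .$$
The idea is to argue by contradiction: if a zero sits at some $t_0\le T-\varepsilon$, then the damping forces $w$ to oscillate on $[t_0,T-\varepsilon/2]$ with bounded period in $s$. Since that interval has $s$-length $\sim\varepsilon\sqrt\lambda/2\to\infty$, this yields more than $n_0$ zeroes.

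\textbf{Energy and confinement.} Let
$$E(s)=\frac{w_s^2}{2}-\frac{(1-w^2)^2}{4}=\frac{1}{\lambda}\,\mathcal{D}(w).$$
As in Section 3, it satisfies $E'(s)=-\lambda^{-1/2}H\,w_s^2\le 0$. We also have $E(0)\le 0$, and $E<0$ unless $w$ is constant. The constant solutions $w\equiv\pm1$ have no zeroes, and $w\equiv 0$ is excluded implicitly by the statement (it must be, since it vanishes everywhere). Hence $E<0$ on $[0,T]$, so $|w|<1$ there and the orbit $(w,w_s)$ stays inside the region bounded by the heteroclinic loops of the undamped system $w''+w-w^3=0$. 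On $[0,T-\varepsilon/2]$ we have $H\ge\delta:=H(T-\varepsilon/2)>0$, because $H$ is decreasing with $H(T)=0$.

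\textbf{Energy loss per swing.} Suppose $w(t_0)=0$ with $t_0\le T-\varepsilon$, and set $s_0=\sqrt\lambda t_0$. Since $E<0$ and $w(s_0)=0$, the orbit winds around the origin. The angle function $\alpha$ from Section 3 keeps decreasing, and each passage by $-\pi$ produces a zero; this is just the $s$-rescaled version of the angle argument already used there. Between two consecutive zeros the orbit goes from $w=0$ to a turning point $w_*$ and back.

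The first key estimate is a lower bound $\int w_s^2\,ds\ge c_0>0$ over such a half-swing, with $c_0$ independent of $\lambda$, as long as $|w_*|$ is bounded away from $0$. This holds because $\int w_s^2\,ds=\int|w_s|\,dw$ and $|w_s|$ is controlled from below by the undamped energy level. If instead the amplitude is already small, then $E\le -1/4+o(1)$ and we are in the easy regime anyway. Either way, after one swing past $s_0$ we get
$$E\le-\min\Big(\frac{c_0\delta}{\sqrt\lambda},\,\frac18\Big).$$

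\textbf{Period bound and counting.} The second estimate is a bound on the time needed for a full winding. For the nearly conservative system at energy level $E\le -\eta<0$, this time is at most $C\log(1/\eta)+C$, because the period blows up only logarithmically near the separatrix. The perturbation term $\lambda^{-1/2}H w_s$, with $H\le H(t_0/2)$ bounded on the relevant range, does not destroy this for $\lambda$ large. So each winding after the first takes $s$-time at most $C\log\lambda$.

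The interval $[s_0,\sqrt\lambda(T-\varepsilon/2)]$ has length at least $\varepsilon\sqrt\lambda/2$. It therefore contains at least $c\,\varepsilon\sqrt\lambda/\log\lambda$ windings, each giving two zeroes. This exceeds $n_0$ once $\lambda>\lambda_0(n_0,\varepsilon)$, which is the contradiction.

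\textbf{Main obstacle.} I expect the hardest step to be the uniform winding-time estimate near the separatrix. It must hold uniformly in $\lambda$ under the small perturbation, which I would prove by comparing with the undamped flow via the energy and the angle monotonicity $\alpha'<0$. It must also handle the lingering of the orbit near the saddles $(\pm1,0)$, which I would control with a standard hyperbolic-saddle passage estimate: passage time $\lesssim\log(1/|E|)$.

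The case $t_0$ near $0$, where $H$ blows up, should cause no trouble. Past $t_0$ we only need $H\ge\delta$ on $[t_0,T-\varepsilon/2]$ for the loss estimate, together with an upper bound on $H$ over $[t_0+\text{one swing},T]$ for the perturbation argument. If $t_0$ is extremely small, I would apply the argument starting from $t_0':=T-\varepsilon$ instead, using that $E(t_0')<0$ and the orbit is already winding.
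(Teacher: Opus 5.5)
Your proposal is correct in outline, and it takes a genuinely different route from the paper's.

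The paper stays in the variable $t$ and argues by total length:
\begin{itemize}
\item It first gets $w(0)$ close to $1$ from Lemmas \ref{elementary} and \ref{F}.
\item It then uses H\"older in Lemma \ref{discrepancybound} to get $\mathcal{D}(w)\le -H(t_z)/(4t_z)$ after a zero $t_z<T-\varepsilon$. This is only an $O(1)$ drop, i.e.\ $E\le -c/\lambda$ in your normalization.
\item Integrating $w''$ directly (Lemma \ref{closeto1}), it shows that each subinterval of $(T-\varepsilon,T]$ where $w$ is near $\pm1$ has length at most $4\lambda^{-1/5}$.
\item By Sturm comparison (Lemma \ref{F}), each subinterval where $1-w^2\ge\delta$ is short.
\item There are at most $n_0+1$ pieces of each kind, so they cannot cover $(T-\varepsilon,T]$.
\end{itemize}

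You instead measure the loss through the action $\int w_s^2\,ds=\int|w_s|\,dw$ over a half-swing. This gives the much stronger drop $E\le -c\lambda^{-1/2}$; the paper's H\"older step, which uses the full $t$-length, throws away exactly this factor $\sqrt\lambda$. Your amplitude dichotomy also removes the need to know $w(0)\approx 1$. You then count windings instead of adding lengths, which gives a quantitative lower bound on the number of zeroes. The paper's route avoids phase-plane geometry entirely.

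Your stronger bound also makes the step you flag as the main obstacle unnecessary. $E\le-\eta$ forces $1-w^2\ge 2\sqrt\eta$. With $\kappa=\lambda^{-1/2}H\le\lambda^{-1/2}H(T-\varepsilon)$ on $[T-\varepsilon,T]$, you get
$$-\alpha'\ \ge\ (1-w^2)-\frac{\kappa}{2}\ \ge\ 2\sqrt\eta-\frac{\kappa}{2}.$$
For $\eta$ of order $\lambda^{-1/2}$, a full winding therefore costs $s$-time $O(\lambda^{1/4})$. The $s$-interval $[\sqrt\lambda(T-\varepsilon),\sqrt\lambda(T-\varepsilon/2)]$ then contains on the order of $\varepsilon\lambda^{1/4}$ windings, with no saddle-passage or logarithmic period estimate needed. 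This also handles tiny $t_0$ cleanly, since the energy bound is inherited by monotonicity. With only the paper's $E\le -c/\lambda$, $\sqrt\eta$ and $\kappa$ are of the same order and this crude bound is useless, which is why the paper needs Lemma \ref{closeto1}.

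One point you should make explicit is where the drop happens. It is realized on the part of the swing where $|w|\le |w_*|/2$. There $|w_s|$ is bounded below (from $E(s)\ge E(s_*)$), so that part takes $O(1)$ $s$-time. Hence the drop occurs before $T-\varepsilon/2$ even if the orbit later lingers near $\pm1$. If no turning point occurs before $T$, use $w(T)$ in place of $w_*$.
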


\bigskip

The theorem is applied to solutions of Equation \ref{ODE2} in the interval $[0,a_0 ]$ and in the interval $[a_0 , D]$ to obtain
the following corollary, which proves Theorem \ref{interface}:

\begin{corollary} Fix $n_0 \in N$. For any $\varepsilon >0$, there exists $\lambda_0 >0$ such that: if $\lambda > \lambda_0$ and $w$ is a solution of 
Equation \ref{ODE2} with at most $n_0$ zeroes in $[0, D]$, then if $w(t_0 )=0$  we have that $t_0 \in (a_0 -\varepsilon , a_0 + \varepsilon )$. 
\end{corollary}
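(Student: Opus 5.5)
The plan is to deduce the Corollary from Theorem \ref{ODE-interface} by applying it separately on $[0,a_0]$ and on $[a_0,D]$ (after reflection).

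\textbf{Left interval.} Let $w$ solve Equation \ref{ODE2} with at most $n_0$ zeroes in $[0,D]$. Its restriction to $[0,a_0]$ solves Equation \ref{ODE3} with $T=a_0$ and $H=h|_{(0,a_0]}$. The hypotheses of Section 4 hold:
\begin{itemize}
\item $h$ is smooth and strictly decreasing on $(0,D)$, with $h'<0$;
\item $h(a_0)=0$, so $h\geq 0$ on $(0,a_0]$;
\item $\lim_{t\to 0} t\,h(t)=n_1$;
\item $w$ extends continuously to $0$ with $w'(0)=0$.
\end{itemize}
The restriction has at most $n_0$ zeroes in $[0,a_0]$. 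Theorem \ref{ODE-interface} therefore gives $\lambda_0^{(1)}$ such that for $\lambda>\lambda_0^{(1)}$ every zero $t_0\in[0,a_0]$ satisfies $t_0\in(a_0-\varepsilon,a_0]$.

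\textbf{Right interval.} Set $\omega(r)=w(D-r)$ on $[0,D-a_0]$ and $\widetilde h(r)=-h(D-r)$, exactly as in Equation \ref{Eq:Singular backward equivalent}. Then $\omega$ solves Equation \ref{ODE3} with $T=D-a_0$ and $H=\widetilde h$, and the hypotheses again hold:
\begin{itemize}
\item $\widetilde h'(r)=h'(D-r)<0$, so $\widetilde h$ is decreasing;
\item $\widetilde h(D-a_0)=-h(a_0)=0$, and $\widetilde h\geq 0$ on $(0,D-a_0]$ because $h\le 0$ on $[a_0,D)$;
\item $\lim_{r\to0} r\,\widetilde h(r)=-\lim_{t\to D}(D-t)h(t)=n_2$, a positive integer;
\item $\omega'(0)=-w'(D)=0$.
\end{itemize}
The number of zeroes of $\omega$ is again at most $n_0$. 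Theorem \ref{ODE-interface} gives $\lambda_0^{(2)}$ such that for $\lambda>\lambda_0^{(2)}$ every zero $s_0$ of $\omega$ lies in $(D-a_0-\varepsilon, D-a_0]$. In terms of $t_0=D-s_0$, this says $t_0\in[a_0,a_0+\varepsilon)$.

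\textbf{Conclusion.} Take $\lambda_0=\max(\lambda_0^{(1)},\lambda_0^{(2)})$. Any zero of $w$ in $[0,D]$ lies in $[0,a_0]$ or in $[a_0,D]$, so it lies in $(a_0-\varepsilon,a_0+\varepsilon)$. Since the constants in Theorem \ref{ODE-interface} depend only on $n_0$, $\varepsilon$ and the fixed function $H$, they are uniform over all solutions $w$, as the statement requires.

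The only point needing care is that Theorem \ref{ODE-interface} requires $H$ to be defined and smooth up to $T$ with $H(T)=0$; this holds because $a_0$ is an interior point where $h$ is smooth. Everything else is bookkeeping. The substantive difficulty lies in Theorem \ref{ODE-interface} itself, which is assumed here.
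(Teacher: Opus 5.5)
Your proposal is correct and is the paper's argument: apply Theorem \ref{ODE-interface} on $[0,a_0]$ with $H=h$, and on $[a_0,D]$ via the reflection $\omega(r)=w(D-r)$, $\widetilde h(r)=-h(D-r)$ already set up in Section 3, then take the larger of the two thresholds. Your hypothesis check omits Section 4's standing assumption $w(0)\in[-1,1]$ (resp.\ $\omega(0)=w(D)\in[-1,1]$), but this is harmless: if $|w(0)|>1$, then $(rw')'=\lambda r w(w^2-1)$ with $r(t)=e^{\int_c^t h}$ shows that $|w|$ increases and stays above $1$ on that side, so there are no zeroes there and the conclusion holds vacuously.
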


\bigskip

For a solution $w$ of Equation \ref{ODE3}, we define the discrepancy function $\mathcal{D} (w) :[0,T] \rightarrow \mathbb{R}$,
as  

$$\mathcal{D} (w) (t) =\frac{1}{2}  w'^2 (t) - \frac{\lambda}{4} (1- w^2 (t) )^2 .$$ 

\noindent
Then $\mathcal{D} (w)' (t) = -H(t)w'^2 (t) \leq 0$.

\medskip

We assume in the rest of this section that $w(0) \in [-1,1]$, $w' (0)=0$. Note that $\mathcal{D}(w)(0) =-\frac {\lambda}{4} (1-w(0)^2 )^2  \leq 0$ and so
$\mathcal{D} (w) (t) \leq 0$ for all $t\in [0,T]$. If $w(0) = 0, 1 $ or $-1$ then $w$ is constant. If  $w(0) \neq \pm 1$ then $\mathcal{D} (w)(0) <0$, which implies
that $\mathcal{D} (w)<0$ on $[0,T]$. Also note that if $w(t) =\pm 1$ then $\mathcal{D} (w)(t) \geq 0$. Therefore for all solutions $w$  with initial condition in
$(-1,1)$ we have that $w(t) \in (-1,1)$ for
any $t\in [0,T]$.

\medskip

We begin with the following elementary observation:

\begin{lemma}\label{elementary}
Let $w$ be a nonconstant solution of Equation \ref{ODE3}. Let $t_1 $ be a critical point of $w$ and $t_2 >t_1$. Then $w(t_2 )^2 <  w(t_1 )^2$.

\end{lemma}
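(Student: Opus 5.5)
Since $\mathcal{D}(w)$ is non-increasing, we compare its values at $t_1$ and $t_2$. We may assume $w(0)\in(-1,1)$, as in the standing assumption of this section. Then $w$ is nonconstant and $w(0)\neq 0,\pm1$, so $\mathcal{D}(w)<0$ on $[0,T]$ and $w(t)\in(-1,1)$ for all $t$. The lemma will follow from a strict decrease of $\mathcal{D}(w)$ between $t_1$ and $t_2$.

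First, $\mathcal{D}(w)'(t)=-H(t)w'(t)^2$, and $H>0$ on $(0,T)$: $H$ is strictly decreasing with $H(T)=0$. Moreover $w'$ cannot vanish on a subinterval of $(t_1,t_2)$. If it did, $w$ would be constant there, and by uniqueness for the ODE at a regular point, $w$ would be a constant solution, hence constant everywhere. That contradicts the assumption that $w$ is nonconstant. Integrating, we get
\[
\mathcal{D}(w)(t_2)-\mathcal{D}(w)(t_1)=-\int_{t_1}^{t_2}H(s)\,w'(s)^2\,ds<0,
\]
since the integrand is nonnegative and positive on a nonempty open set. This also covers $t_2=T$, because $H>0$ on $(t_1,T)$.

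Next, we use $w'(t_1)=0$:
\[
\frac12 w'(t_2)^2-\frac{\lambda}{4}(1-w(t_2)^2)^2<-\frac{\lambda}{4}(1-w(t_1)^2)^2 .
\]
Dropping the nonnegative term $\tfrac12 w'(t_2)^2$ gives $(1-w(t_2)^2)^2>(1-w(t_1)^2)^2$. Both $1-w(t_i)^2$ lie in $(0,1]$ because $|w|<1$, so we may take square roots. This gives $1-w(t_2)^2>1-w(t_1)^2$, that is, $w(t_2)^2<w(t_1)^2$.

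There are two delicate points. The first is the case $t_1=0$, where the equation is singular. There $\mathcal{D}(w)$ is still continuous up to $0$ and $\mathcal{D}(w)'=-Hw'^2\le0$ on $(0,T]$, so the integral identity still holds by taking limits. The second is the strictness, which relies on ruling out $w'\equiv0$ on an interval through uniqueness of solutions at regular points; that is the main step to check carefully.
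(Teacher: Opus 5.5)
Your proof is correct and follows the same route as the paper. You compare $\mathcal{D}(w)$ at the critical point $t_1$ and at $t_2$, drop the term $\frac12 w'(t_2)^2$, and compare $(1-w^2)^2$, while also justifying two steps the paper leaves implicit: the strict decrease of $\mathcal{D}(w)$ (via $H>0$ on $(0,T)$ and $w'$ not vanishing on any interval) and taking square roots using $|w|<1$, which comes from the standing assumption $w(0)\in[-1,1]$.
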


\begin{proof}
Since $\mathcal{D}(w)$ is decreasing we have that

$$  - (\lambda /4) ((1- w^2 (t_2 ) )^2 \leq \mathcal{D} (w)(t_2 ) <  \mathcal{D} (w) (t_1 ) = - (\lambda /4) ((1- w^2 (t_1 ) )^2 . $$

It follows from this inequality that $w(t_2 )^2 <  w(t_1 )^2$.
\end{proof}

\medskip

Note also that if $w$ is a solution then $-w$ is also a solution, and $\mathcal{D}(w) = \mathcal{D}(-w)$. 
We will then often  assume that $w(0) \in [0,1]$. Note that in this case the previous lemma 
implies for instance that $w(0)$ is the global maximum of $w$. If $w(0)$ is very close to 1, then by the  continuity
of solutions of Equation \ref{ODE3} with respect to the initial conditions, we have that
$w$ will remain close to 1 near 0. The following result says that after $w$ decreases to reach a zero, the discrepancy 
function $\mathcal{D}(w)$ will be bounded above away from zero:

\begin{lemma}\label{discrepancybound} Consider an interval $I= [t_0, t_1] \subset [0, T )$. Assume that
$w$ is a solution of Equation \ref{ODE3} such that 
$ | w(t_1 ) -w(t_0 ) | > \frac{1}{2}$. Then $\mathcal{D} (w) (t_1 ) \leq \frac{- H(t_1  )}{ 4 (t_1 -t_0 ) }$. In particular if $\frac{1}{2} <w(0) <1$ and
$w(t_z) =0$ then $\mathcal{D}(w) (t_z ) \leq \frac{- H(t_z )}{ 4  \ t_z} $.
\end{lemma}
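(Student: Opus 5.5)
The plan is to integrate the identity $\mathcal{D}(w)'(t) = -H(t)\,w'(t)^2$ over $I=[t_0,t_1]$ and bound the resulting integral from below in two steps: monotonicity of $H$, then Cauchy--Schwarz.

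First, by the standing assumption of this section that $w(0)\in[-1,1]$ and $w'(0)=0$, we know $\mathcal{D}(w)\le 0$ on $[0,T]$. In particular $\mathcal{D}(w)(t_0)\le 0$. Integrating gives
$$\mathcal{D}(w)(t_1) = \mathcal{D}(w)(t_0) - \int_{t_0}^{t_1} H(t)\,w'(t)^2\,dt \le -\int_{t_0}^{t_1} H(t)\,w'(t)^2\,dt .$$
If $t_0=0$, the integral is still finite. Indeed, $tH(t)\to n_1$ and $w'(0)=0$ with $w$ of class $C^2$ up to $0$, so $w'(t)=O(t)$ and $H w'^2 = O(t)$ near $0$. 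The identity therefore holds by letting the lower endpoint tend to $0$ and using continuity of $\mathcal{D}(w)$.

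Second, since $H$ is decreasing and nonnegative on $(0,T]$, for $t\in[t_0,t_1]$ we have $H(t)\ge H(t_1)\ge 0$. Hence
$$\int_{t_0}^{t_1} H\,w'^2 \ge H(t_1)\int_{t_0}^{t_1} w'^2 .$$
By Cauchy--Schwarz,
$$\int_{t_0}^{t_1} w'^2 \ge \frac{\big(w(t_1)-w(t_0)\big)^2}{t_1-t_0} > \frac{1}{4(t_1-t_0)},$$
using the hypothesis $|w(t_1)-w(t_0)|>\tfrac12$. Combining the three displays gives $\mathcal{D}(w)(t_1)\le -\frac{H(t_1)}{4(t_1-t_0)}$.

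For the particular case, take $t_0=0$ and $t_1=t_z$. Then $|w(t_z)-w(0)| = w(0) > \tfrac12$, so the general statement applies directly and yields $\mathcal{D}(w)(t_z)\le -\frac{H(t_z)}{4t_z}$. Here $t_z<T$ is needed for $I\subset[0,T)$; if $t_z=T$, the bound is trivial because $H(T)=0$ and $\mathcal{D}(w)\le 0$.

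The only delicate point is the singularity of $H$ at $0$ when $t_0=0$. This is handled by the integrability remark in the first step, which relies on $w'(0)=0$. Beyond that, the argument is routine.
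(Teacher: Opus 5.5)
Your proposal is correct and follows the paper's proof. Both use $\mathcal{D}(w)(t_0)\le 0$ to get $\mathcal{D}(w)(t_1)\le -\int_{t_0}^{t_1} H w'^2$, bound $H(t)\ge H(t_1)\ge 0$ by monotonicity, and then apply Cauchy--Schwarz (H\"older) together with $|w(t_1)-w(t_0)|>\tfrac12$. Your extra remarks on the singular endpoint $t_0=0$ and on the case $t_z=T$ are harmless refinements that the paper leaves implicit.
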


\begin{proof} We have that 

$$\mathcal{D} (w) (t_1 ) \leq \mathcal{D} (w) (t_1) - \mathcal{D} (w)(t_0 )=
\int_{t_0}^{t_1}\mathcal{D} (w)' (t) dt =-\int_{t_0}^{t_1} H(t) (w'(t))^2 dt .$$

Since $H$ is monotone decreasing we have that $H(t) \geq H(t_1  )$ for any $t\in I$ and therefore

$$\mathcal{D}(w) (t_1) \leq -H( t_1 ) \int_{t_0}^{t_1} w'(t)^2 dt .$$

By H\"{o}lder inequality 

$$ \left(  \int_{t_0}^{t_1} w'(t) dt  \right)^2   \leq  (t_1 -t_0 ) \int_{t_0}^{t_1} w'(t)^2 dt . $$

And we also have by hypothesis that

$$\frac{(w(t_1 ) -w(t_0 ))^2}{ t_1 -t_0 }  \geq \frac{1}{4(t_1 -t_0 )}.$$

Therefore  

$$\mathcal{D} (w) (t_1) \leq  \frac{-H( t_1 ) (w(t_1 ) -w(t_0 ))^2 }{ t_1 -t_0 } \leq  \frac{-H(t_1  )}{4(t_1 -t_0 )} .$$

\end{proof}

\medskip

Under the conditions of Theorem \ref{ODE-interface}, we want to prove that the length of the interval between the first zero of the solution $w$ of
Equation \ref{ODE3} and $T$ is close to zero. The next two lemmas prove that after the first zero of $w$, the length of the intervals where
$w$ is close to $\pm 1$ is close to 0.

\begin{lemma}\label{closeto1discrepancy}
Let $\delta >0$ small ($\delta <0.1$, for instance), $d>0$ be fixed. 
Let $w$ be a solution of Equation \ref{ODE3}. Assume there is an interval $I=[t_0 , t_1]$ of length 
$d=t_1 - t_0$ contained
in $(0 , T]$ where $w$ verifies $-1<w \leq -1 +\delta $. 

If  $w' \leq 0$ on $I$ then  $\mathcal{D} (w) (t_1 ) \geq \frac{-(w'(t_0 ))^2}{4\lambda (1-\delta )^2 d^2}$.

If $w' \geq 0$ on $I$ then $\mathcal{D} (w) (t_0) \geq \frac{-1}{4 \lambda (1-\delta )^2 d^2}   \left( w'(t_1 ) +H(t_0 )(w(t_1 )- w(t_0 )) \right)^2 .$ 

\medskip

In case $1-\delta <w <1$ a similar result is obtained by applying the previous result to $-w$.

\end{lemma}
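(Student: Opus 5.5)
The plan is to integrate the equation over $I$, after first getting a one-sided bound on the reaction term. On $I$ we have $w\in(-1,-1+\delta]$, so $w<0$, $|w|\geq 1-\delta$ and $1-w^2>0$. Rewriting Equation \ref{ODE3} as
$$w''+Hw'=\lambda |w|(1-w^2),$$
the right-hand side is bounded below by $\lambda(1-\delta)(1-w^2)$. Near $-1$, the quantity $1-w^2=(1-w)(1+w)$ is an increasing function of $w$. So on $I$, the minimum of $1-w^2$ is attained at the endpoint where $w$ is smallest. The estimate is then converted into a lower bound for $\mathcal{D}(w)$ through
$$\mathcal{D}(w)\geq -\frac{\lambda}{4}(1-w^2)^2 .$$

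\emph{Case $w'\leq 0$.} Here $w$ is decreasing on $I$, so $1-w^2\geq 1-w(t_1)^2$ on $I$. Since $H\geq 0$ and $-w'\geq 0$, we have $-Hw'\geq 0$, and the equation gives
$$w''\geq \lambda(1-\delta)\bigl(1-w(t_1)^2\bigr) \quad \text{on } I.$$
Integrating over $I$ and using $w'(t_1)\leq 0$:
$$\lambda(1-\delta)\,d\,\bigl(1-w(t_1)^2\bigr)\leq w'(t_1)-w'(t_0)\leq -w'(t_0)=|w'(t_0)|.$$
Squaring and substituting into $\mathcal{D}(w)(t_1)\geq -\frac{\lambda}{4}(1-w(t_1)^2)^2$ gives exactly
$$\mathcal{D}(w)(t_1)\geq \frac{-(w'(t_0))^2}{4\lambda(1-\delta)^2d^2}.$$

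\emph{Case $w'\geq 0$.} Now $w$ is increasing on $I$, so $1-w^2\geq 1-w(t_0)^2$ on $I$. Integrating $w''+Hw'\geq \lambda(1-\delta)(1-w(t_0)^2)$ over $I$ gives
$$w'(t_1)-w'(t_0)+\int_{t_0}^{t_1}Hw'\geq \lambda(1-\delta)\,d\,\bigl(1-w(t_0)^2\bigr).$$
We bound the left-hand side from above in two steps. First, since $H$ is decreasing and $w'\geq 0$,
$$\int_{t_0}^{t_1}Hw'\leq H(t_0)\bigl(w(t_1)-w(t_0)\bigr).$$
This is finite because $t_0>0$. Second, we drop $-w'(t_0)\leq 0$. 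The resulting right-hand side $w'(t_1)+H(t_0)(w(t_1)-w(t_0))$ is nonnegative, so we may square both sides. Inserting this into $\mathcal{D}(w)(t_0)\geq -\frac{\lambda}{4}(1-w(t_0)^2)^2$ yields the stated bound.

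The case $1-\delta<w<1$ follows by applying the above to $-w$. This is legitimate because $-w$ is again a solution and $\mathcal{D}(-w)=\mathcal{D}(w)$.

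There is no real obstacle here. The only points needing care are choosing the correct endpoint at which $1-w^2$ is smallest (this depends on the sign of $w'$), and the sign bookkeeping for the $H$ term: in the first case it is discarded using $H\geq 0$, and in the second it is estimated using the monotonicity of $H$.
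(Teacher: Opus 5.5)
Your proposal is correct and follows the paper's proof essentially step for step. Both bound the reaction term below by $\lambda(1-\delta)(1-w^2)$ at the endpoint where $w$ is smallest, integrate $w''$ over $I$ (discarding $-Hw'\geq 0$ when $w'\leq 0$, and using $H(t)\leq H(t_0)$ when $w'\geq 0$), and then substitute into $\mathcal{D}(w)\geq -\frac{\lambda}{4}(1-w^2)^2$; your explicit check that the quantity being squared is positive is a detail the paper leaves implicit.
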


\begin{proof} We have that $w'' = -Hw' -\lambda w (1-w^2 )$.

Assume first that  $w'  \leq 0$ on $I$. Then we have that $w'' \geq  \lambda (-w)(1-w^2 ) \geq  \lambda (1-\delta ) (1-w^2 ) 
\geq  \lambda (1-\delta ) (1-w(t_1)^2 ) $  
on the interval. Then 
it follows that  

$$ -w' (t_0 ) \geq w'(t_1 ) -w' (t_0)   = \int_{t_0}^{t_1} w''(t)dt  \geq  \lambda (1-\delta ) (1-w(t_1)^2 ) d . $$

Therefore

$$ 1-w(t_1)^2  \leq \frac{-w'(t_0 )}{\lambda (1-\delta )d}.$$

And then 

$$\mathcal{D} (w)(t_1) \geq \frac{-\lambda}{4} (1- w(t_1)^2  )^2 \geq \frac{-(w'(t_0 ))^2}{4\lambda (1-\delta )^2 d^2}$$

\medskip

In case $w' \geq 0$ on $I$,  we have that 

$$w'( t_1 ) \geq w'(t_1 ) - w'(t_0 ) = \int_{t_0}^{t_1} w''(t) dt = \int_{t_0}^{t_1} -H(t)w'(t) dt + \lambda  \int_{t_0}^{t_1} (-w(t)) (1-w(t)^2) dt$$ 

$$ \geq H(t_0 ) (w(t_0 ) -w(t_1 )) +\lambda (1-\delta )
 (1-w(t_0 )^2) d .$$

Therefore 

$$1-w(t_0 )^2 \leq \frac{1}{\lambda (1-\delta )d}  \left( w'(t_1 ) +H(t_0 )(w(t_1 )- w(t_0 )) \right) .$$

And then

$$\mathcal{D} (w) (t_0) \geq \frac{-1}{4 \lambda (1-\delta )^2 d^2}   \left( w'(t_1 ) +H(t_0 )(w(t_1 )- w(t_0 ) ) \right)^2 .$$

\end{proof}

\begin{lemma}\label{closeto1}
Let $\delta$, $\varepsilon$ be small positive numbers. Consider an interval $I=[t_0 , t_1 ] \subset [T-\varepsilon ,T]$ and let $d =t_1 - t_0$.
Assume that $w$ is a solution of Equation \ref{ODE3} such that 
$-1 < w(t) < -1 + \delta $ for all $t \in I$. Assume also that $w(t_z )=0$ for some $t_z <T-\varepsilon$ so that the corresponding function $\mathcal{D}(w)$
 is bounded away from
zero as in Lemma \ref{discrepancybound}. Then if $\lambda$ is large enough we have that $d \leq \frac{4}{\lambda^{1/5}}$.

\end{lemma}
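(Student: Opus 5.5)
The plan is to play an upper bound on $\mathcal{D}(w)$ on $I$, coming from the zero $t_z$, against a lower bound coming from Lemma \ref{closeto1discrepancy}, and to show that the two are incompatible once $d>4\lambda^{-1/5}$ and $\lambda$ is large.

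\textbf{Upper bound.} By Lemma \ref{discrepancybound} and the monotonicity of $H$ (and $t_z<T$) we have $\mathcal{D}(w)(t_z)\le -H(t_z)/(4t_z)\le -H(T-\varepsilon)/(4T)=:-c_0$. Here $c_0>0$ does not depend on $\lambda$, since $H>0$ on $(0,T)$. Because $\mathcal{D}(w)$ is nonincreasing, $\mathcal{D}(w)\le -c_0$ on all of $[t_z,T]\supset I$. Two consequences follow:
\begin{itemize}
\item since $\mathcal{D}(w)\ge -\frac{\lambda}{4}(1-w^2)^2$, we get $1-w^2\ge 2\sqrt{c_0/\lambda}$ on $I$;
\item since $\mathcal{D}(w)<0$, we get $|w'|\le \sqrt{\lambda/2}\,(1-w^2)$ everywhere.
\end{itemize}

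\textbf{Monotone pieces.} By the remark after Equation \ref{ODE3}, a critical point with $w\in(-1,0)$ is a local minimum, and local maxima have $w\in(0,1)$. Since $w<-1+\delta<0$ on $I$, $w$ has at most one critical point in $I$. Hence $I$ splits into an interval where $w'\le 0$ followed by one where $w'\ge 0$, and one of them, call it $I'$, has length at least $d/2$. Suppose $d>4\lambda^{-1/5}$ and cut $I'$ into two consecutive subintervals of length $\ell=\lambda^{-1/5}$.

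\textbf{Decreasing case ($w'\le 0$ on $I'$).} Apply the first estimate in the proof of Lemma \ref{closeto1discrepancy} on the first subinterval $[s_0,s_1]$, together with $|w'(s_0)|\le\sqrt{\lambda/2}\,2\delta$:
\[1-w(s_1)^2\le \frac{|w'(s_0)|}{\lambda(1-\delta)\ell}\le C\delta\,\lambda^{-1/2+1/5}.\]
Then $|w'(s_1)|\le\sqrt{\lambda/2}\,(1-w(s_1)^2)\le C\lambda^{1/5}$. Applying the same estimate on the second subinterval $[s_1,s_2]$ gives
\[1-w(s_2)^2\le \frac{C\lambda^{1/5}}{\lambda\cdot\lambda^{-1/5}}=C\lambda^{-3/5}.\]
This contradicts $1-w^2\ge 2\sqrt{c_0}\,\lambda^{-1/2}$ for $\lambda$ large. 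Equivalently, $\mathcal{D}(w)(s_2)\ge -C\lambda^{-1/5}>-c_0$.

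\textbf{Increasing case ($w'\ge 0$ on $I'$).} Run the same bootstrap backwards in time using the second inequality of Lemma \ref{closeto1discrepancy}:
\[1-w(t_0)^2\le \frac{1}{\lambda(1-\delta)\ell}\bigl(w'(t_1)+H(t_0)(w(t_1)-w(t_0))\bigr).\]
The extra term is bounded by $H(T-\varepsilon)\,\delta$, a constant independent of $\lambda$, because $t_0\ge T-\varepsilon$. Using the later subinterval first, with $w'(t_1)\le C\sqrt{\lambda}\,\delta$, gives $1-w^2\le C\lambda^{-3/10}$ at its left endpoint. Hence $|w'|\le C\lambda^{1/5}$ there. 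On the earlier subinterval this yields $1-w^2\le C\lambda^{-3/5}$, since the $H$-term only contributes $C\lambda^{-4/5}$. This is the same contradiction.

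The main obstacle is the increasing case: I need to check that the $H$ term and the reversed order of the bootstrap do not spoil the $\lambda$-exponents. If they do, I would first try a third subdivision, replacing $\ell$ by $d/6$, which still works for $d\ge 4\lambda^{-1/5}$ once $\lambda$ is large.
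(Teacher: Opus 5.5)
Your argument is correct, and its skeleton is the paper's. There is at most one critical point (a minimum) in $I$, hence a monotone piece of length $>2\lambda^{-1/5}$. On that piece Lemma \ref{closeto1discrepancy} forces $\mathcal{D}(w)\ge -C\lambda^{-1/5}$ at some point of $I$. This contradicts the $\lambda$-independent bound $\mathcal{D}(w)\le -c_0$ inherited from $t_z$.

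Where you differ is in how you control the derivative fed into Lemma \ref{closeto1discrepancy}.
\begin{itemize}
\item The paper uses an averaging argument. On half of the monotone piece, of length $>\lambda^{-1/5}$, $w$ changes by less than $1$, so some $t_3$ has $|w'(t_3)|\le\lambda^{1/5}$. A single application of Lemma \ref{closeto1discrepancy}, on an interval of length $\ge\lambda^{-1/5}$ ending or starting at $t_3$, then finishes.
\item You use the pointwise bound $|w'|\le\sqrt{\lambda/2}\,(1-w^2)$ coming from $\mathcal{D}(w)\le 0$. A priori this only gives $|w'|\lesssim\delta\sqrt{\lambda}$, so you need two applications of the lemma. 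They bootstrap $1-w^2$ from $O(\delta)$ to $O(\lambda^{-3/10})$, which gives $|w'|=O(\lambda^{1/5})$, and then $1-w^2$ down to $O(\lambda^{-3/5})$.
\end{itemize}

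What each route buys:
\begin{itemize}
\item The paper's route is shorter.
\item Yours is self-improving. Each step on a subinterval of length $\ell$ multiplies the bound on $1-w^2$ by a factor of order $(\sqrt{\lambda}\,\ell)^{-1}$, so more subdivisions would sharpen the exponent $1/5$. A single averaging step only gives $\mathcal{D}(w)\ge -C(\lambda\ell^4)^{-1}$, so it cannot reach length scales below $\lambda^{-1/4}$.
\end{itemize}

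The worry you flag about the increasing case is unfounded, and your own computation already settles it. After dividing by $\lambda\ell$, the $H$-term contributes only $O(\lambda^{-4/5})$, so no third subdivision is needed.
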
 

\begin{proof}

Assume $d > \frac{4}{ \lambda^{1/5}}$. We will reach a contradiction for $\lambda$ large enough. Note that $w$ can have at most 
one critical point in $I$, a local minimum. Therefore we can divide $I$ in at most two intervals such that $w$ is monotone on
each interval. We will reach a contradiction if the length of any of these intervals is greater than $\frac{2}{\lambda^{1/5}}$

Assume first that $w' \geq 0$ on the interval. 
Assume that the length  $d > \frac{2}{ \lambda^{1/5}}$. Let $t_2$ be the middle point of $I$.
Assume now that $w' >   \lambda^{1/5}$ on $[t_2 , t_1 ]$. Since $t_1 -t_2 > \frac{1}{ \lambda^{1/5}}$ this would imply that $w(t_1 ) >0$, which is not
true.  Then there is a a point $t_3 \in [t_2 , t_1 ]$  such that $w'(t_3 ) \leq   \lambda^{1/5}$. 
For $\lambda$ large enough we have that 
$\lambda^{1/5} \geq H(t_0 )(w(t_3 )- w(t_0 ) )$ and therefore we can assume that $\left( w'(t_3 ) +H(t_0 )(w(t_3 )- w(t_0 )) \right)^2 \leq  4  \lambda^{2/5}$.
Since $t_3 - t_0 >\frac{1}{\lambda^{1/5}}$
it follows from Lemma \ref{closeto1discrepancy}, applied to the interval $[t_0 , t_3 ]$,  that 

$$\mathcal{D}(w)(t_0 ) \geq -\frac{   \lambda^{4/5}}{  (1-\delta )^2 \lambda }.$$

For $\lambda$ large enough 

$$\frac{- 1  }{ (1-\delta )^2 \lambda^{1/5}  } > \frac{-H(T-\varepsilon)}{4(T-\varepsilon )} \geq  \frac{-H(t_z )}{4t_z}  .$$

Then it follows from Lemma \ref{discrepancybound} that $\mathcal{D}(w) (t_0 ) > \mathcal{D}(w) (t_z )$.  This is a contradiction since $\mathcal{D}$ is decreasing.

\medskip

Assume now  that $w' \leq 0$ and the length of the interval is  $d > \frac{2}{ \lambda^{1/5} }$. Let $t_2$ be the middle point of the
interval. If $w'(t) < - \lambda^{1/5}$ for all $t\in [t_0 , t_2 ]$ then $w(t_2 ) - w(t_0 ) <-1$ which is not true. Therefore there
exists $t_3 \in [t_0 , t_2 ]$ such that $w'(t_3 ) \geq - \lambda^{1/5}$. Applying Lemma 4.3 to the interval $[t_3 , t_1 ]$ we obtain

$$\mathcal{D} (w) (t_1 ) \geq \frac{ - (w'(t_3 )^2 )}{4\lambda (1-\delta )^2 (t_1 - t_3 )^2} \geq \frac{-1}{4 (1-\delta)^2 \lambda^{1/5}}.$$

\noindent
Applying Lemma \ref{closeto1discrepancy} we would see that for $\lambda$ large enough we have that $\mathcal{D}(w) (t_1 ) > \mathcal{D}(w) (t_z )$, which is a contradiction.

\end{proof}

\medskip

Next we have to bound the length of an interval where a solution $w$ of Equation \ref{ODE3} is bounded away from
$\pm 1$. For this we will apply Sturm comparison techniques (see for instance the book by L. Ince \cite{Ince}). In the following remark we summarize what we will use.

\begin{remark} Let $c\in (0,T)$.  
Consider a nonconstant solution $w$ of Equation \ref{ODE3}.  We will study the number of  zeroes of $w$
in an interval contained in $[c,T]$.
Let $r (x ) = e^{\int_{c}^{x} H (t) dt } >0$ and $p (x) = \lambda (1-w(x)^2 ) r(x)$.

Then $w$ is solution of the equation

\begin{equation}\label{3}
(r (x) w'(x))' +p (x) w(x) =0.
\end{equation}

Note that $r$ is increasing and $r(c)=1$. Let $r_0 = r(T)$.
 Note that $r(x) \in [1,  r_0 ]$ for any $x\in [c,T]$. 
Assume that $1-w(x)^2 \geq \delta$ so that $p(x) \geq \lambda \delta$.

Consider the equation

\begin{equation}\label{4}
(r_0  y'(x))' +\lambda \delta  y(x) =0.
\end{equation}

Note that $y$ is a solution if and only if $y'' +\frac{\lambda \delta}{r_0} y =0$. Let $\mu = \frac{\lambda \delta}{r_0}$.
All solutions of this 
equation are linear combinations of 
$\cos (\sqrt{\mu} x )$ and $\sin (\sqrt{\mu} x)$.

Now assume that the solution $w$ of Equation  \ref{ODE3}  verifies on an interval $I=[t_0 , t_1 ] \subset [c,T]$ that
$1-w(x)^2 \geq \delta$. Let $y$ be the solution of Equation \ref{4} such that $y(t_0 )= w(t_0 )$ and $y' (t_0 ) = w'(t_0 )$.
On any interval of length $\frac{2\pi}{\sqrt{\mu} }$ the solution $y$ must have at least 2 zeroes. 

Then it follows from the classical Sturm-Picone comparison theorem that if the length of $I$ is at least 
 $\frac{2\pi}{\sqrt{\mu} }$ then $w$ must have at least two zeroes in $I$. In particular, this cannot happen if $w$ is monotone.

\end{remark}

\begin{lemma}\label{F}
Let $c \in (0, T)$, $\delta >0$, $\varepsilon >0$ be fixed. Let $w$ be a solution of Equation \ref{ODE3} with at most $n_0\in N$ zeroes. 
Assume that $1-w(x)^2 \geq \delta$ on an interval $I$  of length $d$ contained
in $(c,T]$. If  $\lambda$ is large enough then $d<\varepsilon$.

\end{lemma}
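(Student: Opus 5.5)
Lemma \ref{F} will follow from the Sturm comparison in the Remark above combined with a counting argument. The idea is that an interval of length $\varepsilon$ on which $1-w^2\ge\delta$ contains many consecutive subintervals, each of length $2\pi/\sqrt{\mu}$, while the solution has at most $n_0$ zeroes.

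\textbf{Setup.} We work with $r(x)=e^{\int_c^x H}$, $r_0=r(T)$ and $\mu=\lambda\delta/r_0$, as in the Remark. Since $c$ is fixed and $H$ is smooth on $[c,T]$, the constant $r_0$ depends only on $c$, $T$ and $H$. It does not depend on $\lambda$ or on $w$, so $\mu\to\infty$ as $\lambda\to\infty$ with $\delta$ fixed.

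\textbf{Main step.} Suppose, aiming for a contradiction, that $d\ge\varepsilon$. Choose $\lambda$ so large that
$$\frac{2\pi}{\sqrt{\mu}}(n_0+1)<\varepsilon,\qquad\text{that is,}\qquad \lambda>\frac{4\pi^2 r_0 (n_0+1)^2}{\delta\,\varepsilon^2}.$$
Then $I$ contains $n_0+1$ disjoint consecutive closed subintervals $I_1,\dots,I_{n_0+1}$, each of length $2\pi/\sqrt{\mu}$. The bound $1-w^2\ge\delta$ holds on each $I_j$, so the Remark applies to each of them: on $I_j$ we have $p\ge\lambda\delta$ and $r\le r_0$. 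By Sturm--Picone comparison with the constant-coefficient equation \eqref{4}, whose solutions vanish at least twice on any interval of length $2\pi/\sqrt\mu$, the function $w$ has at least two zeroes in each $I_j$.

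The subintervals may share endpoints, so some zeroes could be counted twice. Even so, they give at least $n_0+2>n_0$ distinct zeroes of $w$, which contradicts the hypothesis. Hence $d<\varepsilon$ once $\lambda$ exceeds the explicit threshold above, and this threshold depends only on $c,\delta,\varepsilon,n_0$ and $H$.

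\textbf{Expected obstacle.} The main point needing care is applying Sturm--Picone correctly when $r$ is not constant. We compare $(r w')'+pw=0$ against $(r_0 y')'+\lambda\delta y=0$, using $r\le r_0$ and $p\ge \lambda\delta$; this is exactly the direction in which the classical theorem forces $w$ to oscillate at least as fast as $y$. A second point is that $w$ must be nonconstant. If $w$ were constant, the condition $1-w^2\ge\delta$ would force $w\equiv0$, which is excluded since solutions with at most $n_0$ zeroes are nonconstant; and if the constant $0$ were allowed it would have infinitely many zeroes anyway.
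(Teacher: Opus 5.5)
Your proof is correct. It uses the same two ingredients as the paper, the Sturm comparison of the Remark and the bound of $n_0$ zeroes, but organizes the count differently.

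The paper splits $I$ into maximal intervals of monotonicity of $w$. Each such interval has length less than $2\pi/\sqrt{\mu}=C/\sqrt{\lambda}$, since a monotone function cannot have two zeroes. There are boundedly many of them, because consecutive critical points bracket exactly one zero (critical values alternate between $(0,1)$ and $(-1,0)$). Hence $d$ is at most a fixed multiple of $1/\sqrt{\lambda}$.

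You instead cut $I$ into $n_0+1$ blocks of length $2\pi/\sqrt{\mu}$ and count at least $2(n_0+1)-n_0=n_0+2$ distinct zeroes directly. This avoids the alternation of critical values and the two partial monotonicity intervals at the ends of $I$, which the paper's ``at most $n_0$ such intervals'' passes over. It also gives an explicit threshold for $\lambda$.

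One small point of rigor, inherited from the Remark. The between-zeros form of Sturm--Picone turns two zeroes of $y$ into only one zero of $w$. To get two zeroes of $w$ in $I_j=[a,a+2\pi/\sqrt{\mu}]$, compare with $y(x)=\sin(\sqrt{\mu}(x-a))$, which vanishes at three points of $I_j$. Alternatively, since you have slack in $\lambda$, take the blocks pairwise disjoint; then one zero per block already gives $n_0+1$ zeroes.
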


\begin{proof} If $t_1$, $t_2$ are consecutive critical points of $w$, then $w$ is monotone on $[t_1 , t_2 ]$ and it has exactly one zero in $(t_1 , t_2 )$.
It follows in particular that there are at most $n_0$ such intervals contained in $I$. 
It follows from the previous remark that there is a positive constant $C$ such that $t_2 -t_1 <\frac{C}{\sqrt{\lambda}}$. Then it follows that
for $\lambda$ large enough
the sum of the lengths of all maximal intervals of monotonicity of $w$ contained in $I$ is less than $\varepsilon$, which implies that $d<\varepsilon$.

\end{proof}

We are now ready to prove Theorem \ref{ODE-interface}:

\begin{proof} (Theorem \ref{ODE-interface}) We can assume that $w(0) >0$. It follows from Lemma \ref{elementary} and Lemma \ref{F} that $w(0)$ is close to 1 if
$\lambda$ is large. 
Assume that $w(t)=0$ for some $t\in (0,T-\varepsilon ]$. Fix any positive, small $\delta$. It follows from Lemma \ref{closeto1} that
any interval contained in $(T-\varepsilon, T]$ where $1-w^2 <\delta$, has length at most $\frac{4}{\lambda^{1/5}}$, if $\lambda$ is large enough. 
On the other hand it follows from the last lemma that for $\lambda$ large enough the length of intervals
where  $1-w^2 \geq \delta$ is less than $\frac{\varepsilon}{2(n_0 +1)}$. But the interval $(T-\varepsilon , T]$ can be divided into at most $n_0 +1$ intervals
where $1-w^2 <\delta$ and at most $n_0 +1$ intervals where $1-w^2 \geq \delta$. From the previous comments it follows that
if $\lambda$ is large enough the sum of the legth of all such intervals is less than $\varepsilon$, which gives a contradiction.

\end{proof}

\section{Energy}

For a solution $w$ of Equation \ref{ODE2} we define the energy of $w$ as

$$\mathcal{E} (w)= \frac{1}{\sqrt{\lambda}}  \int_0^D \frac{w'(t)^2}{2}  dt + \frac{  \sqrt{\lambda} }{4} \int_0^D (1-w^2(t))^2 dt .$$

\bigskip

In this section we will prove that there is a uniform bound on the energy of solutions with a fixed number of zeroes:

\begin{theorem} Given $n_0 \in N$ there exists a constant $C>0$ such that for any solution $w$ of Equation \ref{ODE2} which has exactly 
$n_0$ zeroes, we have that
$\mathcal{E}(w)\leq C$.

\end{theorem}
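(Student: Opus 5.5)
Since $\lambda$ is bounded below only when needed, I split into two regimes. For $\lambda\le\lambda_0$ (with $\lambda_0$ chosen below) the bound is trivial for $\lambda$ away from $0$: $|w|\le 1$ and, by $\mathcal{D}(w)\le 0$ on $[0,a_0]$ and its analogue on $[a_0,D]$, $(w')^2\le \lambda/2$. Hence $\mathcal{E}(w)\le D(\sqrt{\lambda}/4+\sqrt{\lambda}/4)$, which is bounded for $\lambda\le\lambda_0$. (Note that a nonconstant solution with $n_0\ge 1$ zeroes can only exist for $\lambda$ bounded below by $\lambda_1$, by Sturm comparison with the linearized problem; in any case small $\lambda$ is harmless.) It therefore suffices to treat $\lambda$ large.

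For $\lambda$ large the plan is to compare the energy with the integral of the discrepancy. On $[0,a_0]$ write
\[
\frac{(w')^2}{2\sqrt\lambda}+\frac{\sqrt\lambda}{4}(1-w^2)^2=\frac{(w')^2}{\sqrt\lambda}-\frac{\mathcal{D}(w)}{\sqrt\lambda}
=\frac{\sqrt\lambda}{2}(1-w^2)^2+\frac{\mathcal{D}(w)}{\sqrt\lambda}\le \frac{\sqrt\lambda}{2}(1-w^2)^2 ,
\]
using $\mathcal{D}(w)\le 0$. (The same holds on $[a_0,D]$ via the reflection $\widetilde h(r)=-h(D-r)$.) So it suffices to bound $\sqrt{\lambda}\int_0^D(1-w^2)^2$. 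Also $(w')^2\le \frac{\lambda}{2}(1-w^2)^2$ pointwise.

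Fix $\delta$ small and $\varepsilon$ small with $[a_0-\varepsilon,a_0+\varepsilon]\subset(0,D)$, and apply the Corollary of Section 4: for $\lambda$ large all zeroes of $w$ lie in $(a_0-\varepsilon,a_0+\varepsilon)$. I then estimate $\sqrt\lambda\int(1-w^2)^2$ on three kinds of regions.

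\emph{Region 1: the sets where $1-w^2\ge\delta$.} These consist of at most $2(n_0+1)$ intervals, and on each $w$ is monotone except for finitely many pieces. On a monotone piece I change variables $s=w(t)$ and use $|w'|\ge$ something comparable to $\sqrt\lambda(1-w^2)$. This is the delicate point: I need a lower bound on $|w'|$, i.e. $\mathcal{D}(w)$ close to $0$ from below, say $\mathcal{D}(w)\ge -C\lambda^{1-\eta}$. Such a bound follows from the proof of Lemma \ref{closeto1}, where discrepancies near the $\pm1$ plateaus are $\ge -C\lambda^{-1/5}$. Granting it, $\sqrt{\lambda}\int (1-w^2)^2dt\le\sqrt\lambda\int (1-s^2)^2 ds/|w'|\le C$ per monotone piece, and the number of pieces is bounded by $2n_0+2$.

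\emph{Region 2: the sets where $1-w^2<\delta$.} Here I use the linearization: near $\pm1$, $u=1-w^2$ essentially satisfies $u''\approx 4\lambda u$ up to the drift term $Hw'$. A maximum principle or barrier argument gives exponential decay $1-w^2\le C e^{-c\sqrt\lambda\,\mathrm{dist}(t,\text{transition layers})}$, up to boundary effects at $0$ and $D$. These boundary effects are controlled since $w'(0)=0$, and by Lemma \ref{elementary} combined with Lemma \ref{F}, $w(0)$ is close to $\pm1$. Integrating $\sqrt\lambda e^{-c\sqrt\lambda s}$ gives $O(1)$ per layer.

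The main obstacle is making the decay estimate uniform, especially near the singular endpoints, where $H\sim n_1/t$. There I would use the first integral: on $[0,t]$, $\mathcal{D}$ decreasing gives $(w')^2\le\frac{\lambda}{2}[(1-w^2)^2-(1-w(0)^2)^2]$, together with monotonicity of $w$ on $[0,\text{first critical point}]$. If the barrier approach fails, the fallback is to bound $\sqrt\lambda\int_{\{1-w^2<\delta\}}(1-w^2)^2$ by comparing with the ODE $u'=-c\sqrt\lambda\,u$ along monotone pieces, using the same change of variables as in Region 1 but with $s$ near $\pm1$.
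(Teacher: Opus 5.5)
Your reduction to bounding $\sqrt\lambda\int_0^D(1-w^2)^2$ and your treatment of small $\lambda$ are fine. However, the two estimates that carry the large-$\lambda$ argument are missing.

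\textbf{Region 1.} You need $\mathcal{D}(w)\ge -C\lambda^{1-\eta}$ on the transition layers, i.e.\ near $a_0$. That is exactly where $\mathcal{D}(w)$ attains its minimum, since it decreases on $[0,a_0]$ and increases on $[a_0,D]$. The proof of Lemma \ref{closeto1} does not give this bound. It yields a lower bound only at an endpoint of a monotone plateau near $\pm1$ of length $>2\lambda^{-1/5}$. Because $\mathcal{D}(w)$ is decreasing on $[0,a_0]$, a lower bound at some $t<a_0$ says nothing about $(t,a_0]$, and nothing guarantees long plateaus between the zeroes clustered near $a_0$. Passing from a plateau point $c$ to $a_0$ costs $\int_c^{a_0}h\,(w')^2$. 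Bounding that requires an a priori estimate on $\int(w')^2$, which your proposal never supplies.

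\textbf{Region 2.} This estimate is only announced: the barrier/decay argument is not carried out. The fallback substitution $s=w(t)$ needs $|w'|\gtrsim\sqrt\lambda\,(1-w^2)$. That fails at every critical point ($t=0$, $t=D$, and the extrema between consecutive layers), so the plateau contributions are not controlled.

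\textbf{The missing idea} is to bound the gradient term first, which makes the regional analysis unnecessary. From $\mathcal{D}(w)\le 0$ you get $|w'|\le\sqrt{\lambda/2}$. Consecutive critical points are a positive maximum and a negative minimum, so $w$ has exactly $n_0$ maximal monotone pieces, each of total variation at most $2$. Hence $\frac{1}{\sqrt\lambda}\int_0^D(w')^2\le\frac{1}{\sqrt2}\int_0^D|w'|\le\sqrt2\,n_0$ for every $\lambda$.

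Then use the identity
\[
\frac{\sqrt\lambda}{4}\int_0^D(1-w^2)^2=\frac{1}{2\sqrt\lambda}\int_0^D(w')^2-\frac{1}{\sqrt\lambda}\int_0^D\mathcal{D}(w),
\]
which is equivalent to the first form of your own identity. Bound $\mathcal{D}(w)$ below on $[0,a_0]$ by
\[
\mathcal{D}(w)\ge\mathcal{D}(w)(a_0)\ge\mathcal{D}(w)(a_0/4)-h(a_0/4)\int_{a_0/4}^{a_0}(w')^2\ge\mathcal{D}(w)(a_0/4)-\sqrt2\,n_0\,h(a_0/4)\sqrt\lambda .
\]
For $\lambda$ large, $\mathcal{D}(w)(a_0/4)\ge -C/\lambda$ follows from Lemma \ref{closeto1discrepancy}, applied on the plateau near $\pm1$ to the left of $a_0-\varepsilon$ provided by Theorem \ref{ODE-interface}. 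The interval $[a_0,D]$ is handled symmetrically.

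This incidentally gives your Region 1 bound with $\eta=1/2$, but the decomposition is then superfluous. This is the paper's proof, and it needs neither a pointwise lower bound on $|w'|$ nor exponential decay.
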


\begin{proof} Let $w$ be a solution of Equation \ref{ODE2} with $n_0$ zeroes. Recall the discrepancy function $\mathcal{D}(w):[0,D] \rightarrow \re$:

$$\mathcal{D} (w) (t) = \frac{1}{2}  w'^2 (t) - \frac{\lambda }{4} (1- w^2 (t) )^2 .$$ 

We will first find a bound for the first summand of $\mathcal{E} (w)$:

\medskip

Recall that we had that  $\mathcal{D} (w)\leq 0$ on the whole interval $[0,D]$, and therefore  $\frac{1}{\sqrt{\lambda}}  \frac{w'(t)^2}{2} \leq  \frac{\sqrt{\lambda} }{4} (1-w^2(t) )^2 \leq \frac{\sqrt{\lambda}}{4}$. Therefore $|w'| \leq \frac{ \sqrt{\lambda} }{\sqrt{2} }$. If $t_1 < t_2$ are consecutive critical points of $w$ so that $w'$ does not change 
sign on $[t_1 , t_2 ]$ we have that

$$\int_{t_1}^{t_2} \frac{w'(t)^2  }{2\sqrt{\lambda} }dt \leq   \frac{1}{ 2\sqrt{\lambda} } \int_{t_1}^{t_2} |w'(t) | \frac{ \sqrt{\lambda} }{\sqrt{2} } dt
=\frac{1}{2\sqrt{2}} \left|  \int_{t_1}^{t_2} w'(t) dt \right| \leq \frac{1}{\sqrt{2}} .$$

Therefore

$$\frac{1}{\sqrt{\lambda}}  \int_0^D \frac{w'(t)^2}{2}  dt  \leq \frac{n_0}{\sqrt{2}}.$$

Note that the bound is valid for every $\lambda$.

\medskip

Now we use this bound for the first summand  to prove that the second summand of $\mathcal{E}(w)$ is also bounded. Fix any $c\in (0,a_0)$. We will actually use $c=\frac{a_0}{4}$. Then,

$$\frac{\mathcal{D}(w) (a_0)}{\sqrt{\lambda}}- \frac{\mathcal{D} (w) (c)}{\sqrt{\lambda}} = 
\frac{1}{\sqrt{\lambda}} \  \int_c^{a_0} \mathcal{D}(w)' (t) dt = \int_c^{a_0} \frac{-h(t)}{\sqrt{\lambda}} w'(t)^2 dt \geq -h(c) 
\sqrt{2}n_0 .$$

Since $\mathcal{D}(w)$ is decreasing in $(0,a_0)$,  for any $t\in (0,a_0]$ we have

$$\frac{\mathcal{D}(w) (t)}{\sqrt{\lambda}}      \geq \frac{\mathcal{D}(w) (a_0)}{\sqrt{\lambda}} \geq -h(c)\sqrt{2} n_0  +  \frac{\mathcal{D} (w) (c)}{\sqrt{\lambda}} .$$

Then,

$$ \frac{  \sqrt{\lambda} }{4} \int_0^D (1-w^2(t) )^2 dt = \frac{1}{2\sqrt{\lambda}} \int_0^D  w'(t)^2   dt - \frac{1}{\sqrt{\lambda}} \int_0^D  \mathcal{D}(w) (t) dt
 \leq    \frac{n_0}{\sqrt{2}}  + D \left( h(c) \sqrt{2}n_0 -  \frac{\mathcal{D} (w) (c)}{\sqrt{\lambda}} \right) .$$

For any fixed sufficiently small $\varepsilon >0$, it follows from Theorem \ref{ODE-interface} and Lemma \ref{F}
that there exists  a constant $\lambda_0 >0$ such  if $\lambda \geq \lambda_0$, then any solution
$w$ of Equation \ref{ODE2} with $n_0$ zeroes is decreasing on $(0, a_0-\varepsilon)$ and satisfies 
$w(a_0-\varepsilon )  \geq 1-\varepsilon$. 
Take for instance $\varepsilon = \frac{ \min \{ 1, a_0 \} }{10}$.
Obviously, for $\lambda \leq \lambda_0$ we have that

$$ \frac{  \sqrt{\lambda} }{4} \int_0^D (1-w^2(t))^2 dt \leq \frac{D \sqrt{\lambda_0}}{4}.$$

Now assume that $\lambda >\lambda_0$. We let $t_0 =\frac{a_0}{4}$ and note that by the choice of $\varepsilon$ there exists $t_1 \in  (\frac{a_0}{2}, a_0 -\varepsilon )$ such that $w'(t_1 ) > -1$.  Indeed, if $w'(t)\leq -1$ for every $t\in(a_0/2,a_0-\varepsilon)$, then $
w(a_0-\varepsilon)-w(a_0/2)
\leq
-\left(\frac{a_0}{2}-\varepsilon\right).$ On the other hand, since $1-\varepsilon\leq w<1$ on this interval,
$
w(a_0/2)-w(a_0-\varepsilon)\leq \varepsilon,
$
which contradicts the choice $\varepsilon\leq a_0/10$. 

Then it follows from Lemma \ref{closeto1discrepancy} applied to the interval $[t_0 , t_1 ]$ that
$$\frac{\mathcal{D} (w) (a_0 /4 ) }{\sqrt{\lambda}}  \geq \frac{-1}{4 \lambda^{3/2}  (1-\varepsilon )^2 (a_0 /4 )^2}   \left( 1 +h(a_0 /4 )) \right)^2 .$$ 

And therefore if $\lambda \geq \lambda_0$ we have that

$$ \frac{  \sqrt{\lambda} }{4} \int_0^D (1-w^2(t) )^2 dt \leq    \frac{n_0}{\sqrt{2}} + D  \left(  h(a_0 /4) \sqrt{2}n_0   + 
 \frac{ 8   \left( 1 +h(a_0 /4 )) \right)^2    }{ a_0^2 \lambda_0^{3/2} } \right) .$$

Therefore the theorem follows by taking

$$C=  \frac{n_0}{\sqrt{2}} +\min \left(  \frac{D \sqrt{\lambda_0}}{4}     ,   \frac{n_0}{\sqrt{2}} + D \left(   h(a_0 /4) \sqrt{2}n_0 + 
 \frac{ 8    \left( 1 +h(a_0 /4 )) \right)^2    }{ a_0^2 \lambda_0^{3/2} } \right)  \right)  .$$

\end{proof}

\end{document}